\newtheorem{thm}{Theorem}[section]
\newtheorem{proposition}{Proposition}[section]
\newtheorem{corollary}{\bf Corollary}[section]
\newtheorem{remark}{\bf Remark}[section]
\newtheorem{lemma}{\bf Lemma}[section]
\newtheorem{definition}{Definition}[section]
\numberwithin{equation}{section}
\newtheorem{assumption}{Assumption}[section]
\begin{document}
	
	\baselineskip=17pt
	
	\title[]
	{ Zero-Sum Games for piecewise deterministic Markov decision processes with risk-sensitive finite-horizon cost criterion}
	
	\author[Subrata Golui]{Subrata Golui}
	\address{Department of Mathematics\\
		Indian Institute of Technology Bombay\\
		Mumbai, Maharashtra 400076, India}
	\email{golui@math.iitb.ac.in}
	
%
	
	
	\date{}
	
	\begin{abstract}
		\vspace{2mm}
		\noindent
		This paper investigates the two-person zero-sum stochastic games for piecewise deterministic Markov decision processes with risk-sensitive finite-horizon cost criterion on a general state space. Here, the transition and cost/reward rates are allowed to be unbounded from below and above.  Under some mild conditions, we show the existence of the value of the game and an optimal randomized Markov saddle-point equilibrium in the class of all admissible feedback strategies. By studying the corresponding risk-sensitive finite-horizon optimal differential equations out of a class of possibly unbounded functions, to which the extended Feynman-Kac formula is also justified to hold, we obtain our required results.
		
		\vspace{2mm}
		
		\noindent
		{\bf Keywords:}
		 Piecewise deterministic Markov zero-sum games; Borel state space; unbounded transition and cost/reward rates; risk-sensitive finite-horizon cost criterion; Shapley equation; saddle point equilibrium.
		
	\end{abstract}
	
	\maketitle
	
\section{INTRODUCTION}
This paper analyzes the two-person risk-sensitive finite-horizon zero-sum stochastic games  for piecewise deterministic Markov decision processes (PDMDPs) with unbounded transition and cost/reward rates under admissible strategies. Here the state space is a general Borel set and the action spaces are Borel sets. Our goal is to prove the existence of value of the game and saddle point equilibrium and give a characterization of the optimal strategies in terms of the corresponding risk-sensitive finite-horizon optimality equation (Shapley equation).
It is well known that, game theory is a mathematical framework to analyze real-life situations among competing players and produce satisfactory decision-making for competing players. Game theory has a wide range of applications such as psychology, evolutionary biology, politics, social sciences, economics and business. Markov games are one type of stochastic dynamic games, where the state dynamics of the games are determined by Markov processes. Now a days, Markov games have received increasing attentions and have been widely investigated; see, for instance, \cite{HLG_MMOR_2023}, \cite{GH4},  \cite{GH1},  \cite{S_PMS_2020}. 

This paper deals with zero-sum piecewise deterministic Markov games (PDMGs), one type of Markov games whose state dynamics are driven by piecewise deterministic Markov decision processes (PDMDPs). PDMDPs evolve through random jumps at random time points while the motion between jumps follows a flow. In particular, if the flow remains unchanged over time, PDMDPs reduce to continuous-time (pure jump) Markov processes (CTMPs). Roughly speaking, the uncontrolled version of the process evolves as follows: given the current state, the process evolves deterministically according to some flow mapping $\phi$, up to the next jump, taking place after a random time whose distribution is (nonstationary) exponential, and the dynamics continue in the similar manner. A detailed book treatment with many examples of this and more general type of processes, allowing deterministic jumps, can be found in \cite{D_1993}. These features of PDMDPs make them have wide applications in many areas such as management science, operations research, and engineering. There has been a vast literature on PDMDPs, \cite{BR_SPR_2011}, \cite{CDP_SIAM_2016}, \cite{HG_ST_2019}. However, there are very few literature that considers piecewise deterministic Markov games (PDMGs), \cite{CD_AMO_2018}.
 
Risk-sensitive game is a generalization of classical stochastic game in which the degree of risk aversion or risk tolerance of the players in the games is explicitly parameterized in the cost criterion and influences the outcome of the game directly, \cite{GP}. Risk-sensitive or ``exponential of integral" cost is a popular cost criterion, particularly in finance (see, e.g., \cite{BP}) because it captures the effects of higher order moments of the cost in addition to its expectation. The  risk-sensitive  cost has many important  applications including  mathematical  finance see, Whittle \cite{WH} and references therein.  In the literature, the risk-neutral criterion is studied extensively for CTMDPs  under different sets of conditions \cite{GH2}. In risk-neutral stochastic games, players ignore the risk since they usually consider the expectation of the integral of costs. This along the lines of the expected utility theory of von Neumann and Morgenstern. Risk-neutral criterion has some limitations namely if the variance is large then there can be issues with the optimal control. In real life, the different decision-makers may have the different risk preferences. Thus, it is desirable to take the risk preferences of the decision-makers into consideration in the performance criterion. The risk-sensitive finite-horizon cost criterion uses the exponential utility function to reflect the risk preferences of the players. In 1995, Bell \cite{B} gave a model to quantify the trade off between risk and return through the notion risk-return function and came up with classes of utility function which have risk-return interpretation and one of them is exponential utility function. Finite-horizon risk-sensitive CTMDP is considered in  \cite{GS}, \cite{GLZ},  \cite{GZ}, \cite{W}, while for infinite horizon risk-sensitive CTMDP see,  \cite{GS}, \cite{KP2}, \cite{KP1}, \cite{Z1}, and the references therein. Infinite horizon risk-sensitive CTMDP for piecewise deterministic Markov decision processes has been studied in \cite{HLG_OR_2022}, \cite{GZ2}.  In \cite{GLZ}, the authors investigated the finite-horizon risk-sensitive continuous-time Markov decision processes with unbounded transition and cost rates on a countable state space. 

Although there are some literature for risk-sensitive control of PDMDPs, \cite{HLG_ORL_2018}, \cite{HLG_OR_2022}, \cite{GZ2} but the corresponding literature in the context of risk-sentive PDMGs is not investigated yet.  This is the first study that investigates risk-sensitive PDMGs.



Here, we consider the risk-sensitive finite-horizon zero-sum games for piecewise deterministic Markov decision processes with the unbounded costs  and transition rates in the class of all admissible multi-strategies on general state space, which have not been studied yet. This paper makes an attempt to study the existence of saddle point equilibria under the risk-sensitive finite-horizon cost criterion for PDMDPs. The game model in this paper is nonhomogeneous (i.e., the transition rates and costs depend on the time parameter).  In section 3, we show that the value function has an upper and lower bound. Then we state the extended Feynman-Kac formula. Here, we also establish the comparison theorem. In section 4, we show that if the cost and transiton rates are bounded then the Shapley equation has a solution. Proposition \ref{prop 4.2}, conveys that the value function is decreasing in $t$. Under some suitable assumptions, in Theorem \ref{thm 4.1}, we establish that if the cost rates are nonnegative, the optimality equation has a solution. Atlast in Theorem \ref{thm 4.2}, we prove our final results, i.e., we prove that if the cost rates are real-valued functions, then the Shapley equation (\ref{eq 3.13}), has a solution  and by measurable selection theorem in \cite{N}, we show that the existence of saddle point equilibria. Then by Feynman-Kac formula, we ensure that the solution of the Shapley equation is unique. We also show that the value of the game exists.
The rest of this article is organized as follows. Section 2 provides the game model, some definitions, and the finite-horizon zero-sum game criterion. In Section 3, we discuss some preliminary facts, some assumptions, the Feynman-Kac formula and  prove the stochastic representation of the solution of the Shapley equation (\ref{eq 3.13}). The existence of unique solution to the risk-sensitive finite-horizon Shapley equation, the value of the game and saddle-point equilibrium in the class of Markov strategies for the  zero-sum game are proven in Section 4.
\section{The game model}
In this section we introduce the two-person zero-sum stochastic game  model for piecewise deterministic Markov decision processes comprises the following elements
\begin{equation}
\mathbb{M}:=\{S, A, B, (A(t,x)\subset A, B(t,x)\subset B, t\geq 0, x\in S),q( \cdot|t,x, a,b),\phi(x,t), c(t,x, a,b), g(t,x)\} \label{2.1}
\end{equation}
\begin{itemize}
	\item  a Borel space $S$ (Borel subset of a complete seperable metric space), called the state space, whose elements are referred to as states of the system and the corresponding Borel $\sigma$-algebra is $\mathcal{B}(S)$. 
	\item   $A$ and $B$ are the action sets for players 1 and 2, respectively. The action spaces $A$ and $B$ are assumed to be Borel spaces with the Borel $\sigma$-algebras $\mathcal{B}(A)$ and $\mathcal{B}(B)$, respectively.
	\item  For each $t\geq 0$ and $x\in S$, $A(t,x)\in \mathcal{B}(A)$ and $B(t,x)\in \mathcal{B}(B)$ denote the sets of admissible actions for players 1 and 2 in state $x$, respectively. Let $K:=\{(t,x, a,b)|x\in S, a\in A(t,x), b\in B(t,x)\}$, which is a Borel subset of $[0,\infty)\times S\times A\times B$.
	
	\item Given any $(t,x, a,b)\in K$, the transition rate $q(\cdot |t, x, a,b)$ is a signed kernel on $S$ such that  $q(D| t,x, a,b)\geq 0 $ where $(t,x,a,b)\in K$ and $x\notin D$. Moreover, we assume that $q(\cdot | t,x, a,b)$ is conservative in the sense of $q(S|t,x, a,b)\equiv 0$, and stable in that of
	\begin{equation}
	q^{*}(x):=\sup_{t\geq 0,a\in A(t,x),b\in B(t,x)}q(t,x, a,b)<\infty \quad \forall x \in S,\label{eq 2.2}	\end{equation} 
	where $q(t,x, a,b):=-q(\{x\} |t, x, a,b)\geq 0 \text{ for all } (t,x, a,b)\in K.$
	\item The function $\phi(x,t)$ is called a flow, is a measurable function from $S\times \mathbb{R}$ to $S$. The flow defines the motions between jumps of the PDMDPs. We assume that $\phi(x,s+t)=\phi(\phi(x,s),t)$ for all $x\in S$ and $(s,t)\in \mathbb{R}^2$.  $\phi(x,0)=x$, the function $ t \rightarrow \phi(x, t)$ is continuous.
	\item Finally, the real-valued running cost rate function $c$ is measurable on $K$ and the real-valued terminal cost function $g$ is measurable on $[0,\infty)\times S$. 
\end{itemize}


The goal of player 1 is to maximize his/her rewards, whereas that of player 2 is to minimize his/her costs with respect to some performance criterion $\mathscr{J}(\cdot,\cdot, \cdot,\cdot)$, which in our present case is defined by (\ref{eq 2.5}), below.
To formalize what is described above, below we describe the construction of piecewise deterministic Markov decision processes (PDMDPs) under possibly admissible strategies.
To construct the underlying PDMDPs (as in \cite{CDP_SIAM_2016}, \cite{HLG_ORL_2018}, \cite{GZ_AMP_2018}, \cite{HG_ST_2019}), we introduce some notations: let $S_\Delta:=S \cup \{\Delta\}$ (with some isolated state $\Delta \notin S$), $\Omega^0:=(S\times(0,\infty))^\infty$, $\Omega:=\Omega^0 \cup \{(x_0, \theta_1, x_1, \cdots ,\theta_k, x_k, \infty, \Delta, \infty,\Delta, \cdots)|x_0 \in S, x_l \in S, \theta_l\in (0,\infty),$ \text{for each} $1\leq l\leq k, k\geq 1\}$, and let $\mathscr{F}$ be the Borel $\sigma$-algebra on $\Omega$. Then we obtain the measurable space $(\Omega, \mathscr{F})$. 
For each $k\geq 0$, $ \omega:=(x_0, \theta_1, x_1, \cdots , \theta_k, x_k, \cdots)\in \Omega,$ define $X_0(\omega):=x_0$, $T_0(\omega):=0$, $X_k(w):=x_k$, $T_k(\omega):= T_{k-1}(\omega)+\theta_{k}$, $T_\infty(\omega):=\lim_{k\rightarrow\infty}T_k(\omega)$. Using $\{T_k\}$, we define the state process $\{\xi_t\}_{t\geq 0}$ as
	\begin{align}
\xi_t(\omega):=\left\{ \begin{array}{llll}&\phi(X_n(\omega),t-T_n(\omega)),~\text{if}~T_n(\omega)\leq t<T_{n+1}(\omega),\\&\Delta,~\text{if}~t\geq T_{\infty}.\label{eq 2.3}
\end{array}\right.
\end{align}
The process after $T_\infty$ is regarded to be absorbed in the state $\Delta$. Thus, let $q(\cdot | t,\Delta, a_\Delta,b_\Delta):\equiv 0$, $A_\Delta:=A\cup \{a_\Delta\}$, $B_\Delta:=B\cup \{b_\Delta\}$, $ A(t,\Delta):=\{a_\Delta\}$, $B(t,\Delta):=\{b_\Delta\}$, $c(t,\Delta, a,b):\equiv 0$ for all $(a,b)\in A_\Delta\times B_\Delta$, where $a_\Delta$, $b_\Delta$ are isolated points. Moreover, let $\mathscr{F}_t:=\sigma(\{T_k\leq s,X_{k}\in D\}:D\in \mathcal{B}(S),0\leq s\leq t, k\geq0)$ for all $t\geq 0$, $\mathscr{F}_{s-}=:\bigvee_{0\leq t<s}\mathscr{F}_t$, and $\mathscr{P}:=\sigma(\{A\times \{0\},A\in \mathscr{F}_0\} \cup \{ B\times (s,\infty),B\in \mathscr{F}_{s-}\})$ which denotes the $\sigma$-algebra of predictable sets on $\Omega\times [0,\infty)$ related to $\{\mathscr{F}_t\}_{t\geq 0}$.
In order to define the risk-sensitive cost criterion, we need to introduce the definition of strategy below.
\begin{definition}
	An admissible feedback strategy for player 1, denoted by $	\pi^1=\{\pi^1(t)\}_{t\geq 0}$, is a transition probability $	\pi^1(da| \omega, t)$ from $(\Omega\times[0,\infty),\mathscr{P})$ onto $(A_\Delta,\mathcal{B}(A_\Delta))$, such that $	\pi^1(A(\xi_{t-}(\omega))| \omega, t) = 1$.
	Using appropriate projections of the transition kernel $\pi^1$, an admissible feedback strategy for player 1, determines and is, in turn,  determined by a sequence $\{\pi _k^1,k\geq 0\}$ of stochastic kernel on $A$ such that
	\begin{align*}
	\pi^1(da | \omega,t)&=I_{\{t=0\}}(t)\pi _0^1(da|x^{'}_0, 0)+\sum_{k\geq 0}I_{\{T_k< t\leq T_{k+1}\}}\pi^1 _k(da|x^{'}_0, \theta_1, x^{'}_1, \dots , \theta_k, x^{'}_k, t-T_k)\notag\\
	&+I_{\{t\geq  T_\infty\}}\delta_{a_\Delta}(da),
	\end{align*}
	where $\pi _0^1(da|x^{'}_0, 0)$ is a stochastic kernel on $A$ given $S$ such that $\pi _0^1(A(x^{'}_0)|x^{'}_0, 0)=1$, $\pi^1 _k (k\geq 1)$ are stochastic kernels on $A$ given $(S\times (0,\infty))^{k+1}$ such that $\pi _k^1(A(x^{'}_k)|x^{'}_0,\theta_1,x^{'}_1,\cdots,\theta_k,x^{'}_k,t-T_k )=1$, and $\delta_{a_\Delta}(da)$ denotes the Dirac measure at the point $a_\Delta$.
\end{definition} For more details see \cite[Definition~2.1, Remark~2.2]{GS1}, \cite{PZ}, \cite{Z_JAP_2014}.
The set of all admissible feedback strategies for player 1 is denoted by $\Pi^1$.
A strategy $\pi^1\in \Pi^1$ for player 1, is called a Markov if $\pi^1(da | \omega,t)=\pi^1(da | \xi_{t-}(w),t)$ for every $w\in \Omega$ and $t\geq 0$, where $\xi_{t-}(w):=\lim_{s\uparrow t}\xi_s(w)$. We denote by  $\Pi_{m}^1$ the family of all Markov strategies, for player 1. The  sets of  admissible feedback strategies $\Pi^2$,  Markov strategies $\Pi_m^2$ for player 2 are defined analogously.

For each $(\pi^1,\pi^2)\in \Pi^1\times \Pi^2$, the random measure $m^{\pi^1,\pi^2}$ defined by 
	\begin{equation}
		m^{\pi^1,\pi^2}(B|\omega,t)dt:=\int_{B}\int_{A}q(B|t,\xi_{t-},a,b)\pi^1(da|\omega,t)\pi^2(db|\omega,t)I_{\{\xi_{t-}\notin B\}}dt,~B\in\mathcal{B}(S)\label{eq 2.4}
	\end{equation}
is predictable, see Jacod (1975) \cite{J}.
Under Assumption \ref{assm 3.1}, below, for any initial distribution $\gamma$ on $S$ and any multi-strategy $(\pi^1,\pi^2)\in \Pi^1\times \Pi^2$, Theorem 4.27 in \cite{KR} yields the existence of a unique probability measure denoted by $P^{\pi^1,\pi^2}_\gamma$ (depending on $\gamma$ and $(\pi^1,\pi^2)$) on $(\Omega,\mathscr{F})$ such that $P^{\pi^1,\pi^2}_\gamma(\xi_0=x)=1,$ and with respect to which, $m^{\pi^1,\pi^2}(\cdot|\omega,t)dt$ is the dual predictable projection of the random measure $\sum_{n\geq 1}\delta _{(T_n,X_n)}(dt,dx)$ of the marked point process $\{T_n,X_n\}$ on $\mathcal{B}((0,\infty)\times S)$, see \cite{K} or Chapter 4 of \cite{KR} for more details. Here $\delta _{(T_n,X_n)}(dt,dx)$ is the Dirac measure concentrated at $(T_n,X_n)$. Let $E^{\pi^1,\pi^2}_\gamma$ be the corresponding expectation operator. In particular, $P^{\pi^1,\pi^2}_\gamma$ and $E^{\pi^1,\pi^2}_\gamma$ will be respectively written as $P^{\pi^1,\pi^2}_x$ and $E^{\pi^1,\pi^2}_x$ when $\gamma$ is the Dirac measure at a state $x$ in $S$.\\
For any compact metric space $Y$, let $P(Y)$ denote the space of probability measures on $Y$ with Prohorov topology. For each $x\in  S$, $t\in [0,T]$, $\mu\in P(A(t,x))$ and $\nu\in P(B(t,x))$, the associated cost and transition rates are defined, respectively, as follows: 
$$c(t,x,\mu,\nu):=\int_{B(t,x)}\int_{A(t,x)}c(t,x,a,b)\mu(da)\nu(db),$$ 
$$q(\cdot|t,x,\mu,\nu):=\int_{B(t,x)}\int_{A(t,x)}q(\cdot|t,x,a,b)\mu(da)\nu(db).$$
 Let $E^{\pi^1,\pi^2}_x$ be the expectation operator with respect to  $P^{\pi^1,\pi^2}_x$. Now we provide the definition of the risk-sensitive finite-horizon cost criterion for zero-sum PDMGs. In the following, we fix any risk-sensitivity coefficient $\lambda\in(0,1]$ and the length of the horizon $T>0$. For each $x\in S$, $t\in [0,T]$ and any $(\pi^1,\pi^2) \in \Pi^1\times \Pi^2$, the risk-sensitive $T$-horizon cost criterion is defined by
\begin{equation}
\mathscr{J}^{^{\pi^1,\pi^2}}(0,x):=\frac{1}{\lambda}\ln  {E}^{\pi^1,\pi^2}_x\biggl[e^{\lambda\int_{0}^{T}\int_{B}\int_{A}c(t,\xi_t,a,b)\pi^1(da|\omega,t)\pi^2(db|\omega,t)dt+\lambda g(T,\xi_T)}\biggr]\label{eq 2.5}
\end{equation}
provided that the integral is well defined.
For each $(\pi^1,\pi^2) \in \Pi_m^1\times \Pi_m^2$, it is well known that $\{\xi_t,\geq 0\}$ is a Markov Process on $(\Omega,\mathscr{F},P^{\pi^1,\pi^2}_\gamma)$, and thus for each $x\in S$ and $t\in [0,T]$,
\begin{equation}
\mathscr{J}^{^{\pi^1,\pi^2}}(t,x):=\frac{1}{\lambda}\ln  {E}^{\pi^1,\pi^2}_{(t,x)}\biggl[e^{\lambda\int_{t}^{T}\int_{B}\int_{A}c(s,\xi_s,a,b)\pi^1(da|\xi_s,s)\pi^2(db|\xi_s,s)ds+\lambda g(T,\xi_T)}\biggr]\label{eq 2.6}
\end{equation}
is well defined.
We also need the following concepts. The functions on S defined as \\
$L(x):=\sup_{\pi^1\in \Pi^1}\inf_{\pi^2\in \Pi^2}\mathscr{J}^{\pi^1,\pi^2}(0,x)$ and $U(x):=\inf_{\pi^2\in \Pi^2}\sup_{\pi^1\in \Pi^1}\mathscr{J}^{\pi^1,\pi^2}(0,x)$ are called, respectively, the lower value and the upper value of the game. It is clear that 
$$L(x)\leq U(x)~ \text{for all}~x\in S.$$
\begin{definition}
If $L(x)=U(x)$ for all $x\in S$, then the common function is called the value of the game and is denoted by $\mathscr{J}^{*}(0,x)$.
\end{definition}
\begin{definition}
	Suppose that the game has a value $\mathscr{J}^{*}$. Then a strategy $\pi^{*1}$ in $\Pi^1$ is said to be optimal for player 1 if
	 $$\inf_{\pi^2\in \Pi^2}\mathscr{J}^{\pi^{*1},\pi^2}(0,x)=\mathscr{J}^{*}(0,x)~ \text{for all}~ x\in S.$$
	Similarly, $\pi^{*2}\in \Pi^2$ is optimal for player 2 if
	$$\sup_{\pi^1\in \Pi^1}\mathscr{J}^{\pi^1,\pi^{*2}}(0,x)=\mathscr{J}^{*}(0,x) ~ \text{for all}~x\in S.$$
		If $\pi^{*k}\in \Pi^k$ is optimal for player k (k=1,2), then $(\pi^{*1},\pi^{*2})$ is called a pair of optimal strategies and also called a saddle-point equilibrium.
\end{definition}
Some comments are in order.

\begin{remark}\label{R1} We now explain the significance of the risk-sensitive criterion.
	\begin{enumerate}
		\item[(i)]  Let $Y$ be a  random cost accrued over finite/infinite time horizon.
		Let the (constant) coefficient of absolute risk aversion be given by  $\theta \in \mathbb{R}$. Let $U_{\theta}$ be a utility function given by
		\begin{eqnarray*}
			U_{\theta}(x) = \left\{
			\begin{array} {lll}
				sgn(\theta) e^{\theta x},   & \rm{if}\: \theta \neq 0,\\
				x, & \rm{if }\: \theta = 0.\\
			\end{array}\right.
		\end{eqnarray*}
		Suppose the decision maker evaluates the random cost  $Y$ via $ E(U_{\theta}(Y))$.
		A certainty equivalent of $Y$ is a number $J(\theta, Y)$ such that
		$$ U_{\theta}(J(\theta, Y)) = E(U_{\theta}(Y)).$$
		Therefore for a person with the risk-sensitive factor $\theta$, paying the random cost $Y$ is tantamount to paying a deterministic cost
		$J(\theta, Y)$. It is easy to see that
		$$J(\theta, Y) = \frac{1}{\theta} \log(E e^{\theta Y}).$$
		If $\theta > 0$, then $J(\theta, Y) \geq E Y$. Thus the decision maker is paying a higher cost for being risk-averse.
		If $\theta <0$, then the decision maker is risk-seeking. Finally $\theta = 0$ corresponds to the risk-neutral case.
		
		The risk of a random quantity is also associated with its variance in the literature in economics. That is why the controller may wish to minimize both mean and variance.
		In the risk-neutral case: the decision make minimizes $E(Y).$
		In the risk-sensitive case: the decision maker seeks to minimize $J(\theta, Y) = \frac{1}{\theta}\log(E(e^{\theta Y})).$
		
		For a small value of $\theta$, by Taylor series expansion,
		$$J(\theta, Y)\approx E(Y) + \frac{\theta}{2}Var(Y).$$
		The right hand side above is a standard utility employed in a portfolio optimization problem. However, the above may not be suitable
		for games \cite{N1}.
		
		\item[(ii)] There are other non-linear risk-sensitive utility functions, e.g., power, logarithm. But these utility functions do not
		lead to certainty equivalence. Note that usually utility of a payoff is defined as a concave function. By analogy we are treating a convex function as  ``utility" associated with the cost.
		
		\item[(iii)] In case the random cost $Y$ is determined by two players who are strictly competitive, then in the risk-neutral case the sum of the expected random cost and the expected
		random payoff is zero. Thus in this case we can define saddle-point equilibrium. However, this is not going to be the case for any non-linear utility function including the one we are addressing here. Thus  risk-sensitive zero-sum games have to be studied via Nash equilibria \cite{N1}. 
		
		\item[(iv)] The  stochastic game with cost criterion (\ref{eq 2.5})  has  primarily been formulated from the viewpoint of the second player, i.e. the minimizer who is risk-averse. The first player (the maximizer)
		is a virtual player who is antagonistic   to the second player. Such games have applications in queueuing systems where each player treats the rest of the players as a superplayer
		antagonistic to him/her. We refer to  \cite{Alt}  for a zero-sum stochastic game in a flow control problem  in discrete time,  and \cite{GSK}, \cite{GhoshPradhan} for analogous problems in 
		continuous time. Such a game is also applied in \cite{BG2} for a temporal CAPM problem where each investor treats the rest of the investors as a superplayer antagonistic to him/her.

		\item[(v)] At any point of time, the accumulated history of the game includes past and present states, past sojourn times and past actions taken by players. In our game model  each player's  admissible strategies
		include only past and present states and past sojourn times. Hence  such strategies are called feedback strategies. Inclusion of history-dependent strategies is infeasible even for one player games; see Proposition 1 in \cite{Ney}.

			\item[(vi)]  For a fixed state $i \in S$, if we treat the game on the spaces of strategies  with the cost given by  (\ref{eq 2.5}), then it  becomes a zero-sum game.
		Thus we can define a saddle-point equilibrium, optimal strategies. This is what we have done in this work.

	\end{enumerate}
	
\end{remark}
 \section{Preliminaries}
 To prove the existence of a pair of optimal strategies, we need to develop some preliminary  facts about the risk-sensitive finite-horizon PDMGs. 
 Since the rates $q(dy|t,x,a,b)$ and costs $c(t,x,a,b)$ are allowed to be unbounded, we next assume some conditions for the non-explosion of $\{\xi_t,t\geq 0\}$ and finiteness of $ \mathscr{J}^{\pi^1,\pi^2} (0, x )$, which had been widely used in PDMDPs; see, for instance,
 \cite{CDP_SIAM_2016}, \cite{HLG_ORL_2018}, \cite{GZ_AMP_2018}, \cite{HG_ST_2019} and references therein.
 \begin{assumption}\label{assm 3.1}
	
There exist a function $V: S \to [1,\infty)$ and constants $\rho_1\geq 0$, $b_1\geq 0 $, $M_1\geq 1$ and $M_2\geq 1$ such that
	\begin{enumerate}
		\item [(i)] $\int_{S}V(\phi(y,t))q(dy |s, x, a,b)\leq \rho_1 V(\phi(x,t))+b_1$ for each $(s,x, a,b)\in K$, $t\in \mathbb{R}_{+}$;
		
		\item [(ii)] there exists a sequence $\{S_n,n\geq 1\}$ of measurable subsets of $S$ such that $S_n\uparrow S$, $\sup_{(t,x,a,b)\in K,x\in S_n} q(t,x,a,b)<\infty$, and $\lim_{n\rightarrow \infty}\inf_{x\notin S_n}V(\phi(x,t))=\infty$ for all $t\in \mathbb{R}_{+}$.
		
		\item[(iii)] $e^{2(T+1)|c(t,x,a,b)|}\leq M_2 V(\phi(x,T-t))$ and $e^{2(T+1)|g(t,x)|}\leq M_2 V(\phi(x,T-t))$ for each $(t,x,a,b)\in K_{[0,T]},$ where $K_{[0,T]}=\{(s,x,a,b):s\in [0,T],x\in S,a\in A(s,x), b\in B(s,x)\}$.
	\end{enumerate}
\end{assumption}
 Since logarithm is an increasing function, instead of studying $\mathscr{J}^{\pi^1,\pi^2}(0,x)$, we will consider $J^{\pi^1,\pi^2}(0,x)$ on $ [0,T]\times S\times \Pi^1\times\Pi^2$ and  $J^{\pi^1,\pi^2}(t,x)$ on $[0,T]\times S\times \Pi_m^1\times\Pi_m^2$, respectively given by
  \begin{align}
 J^{\pi^1,\pi^2}(0,x):= {E}^{\pi^1,\pi^2}_x\biggl[e^{{\lambda\int_{0}^{T}\int_{B}\int_{A}c(t,\xi_t,a,b)\pi^1(da|\omega,t)\pi^2(db|\omega,t)dt+\lambda g(T,\xi_T)}}\biggr].\label{eq 3.1}
 \end{align}
 and 
 \begin{equation}
 {J}^{\pi^1,\pi^2}(t,x):=  {E}^{\pi^1,\pi^2}_{(t,x)}\biggl[e^{\lambda\int_{t}^{T}\int_{B}\int_{A}c(s,\xi_s,a,b)\pi^1(da|\xi_s,s)\pi^2(db|\xi_s,s)ds+\lambda g(T,\xi_T)}\biggr].\label{eq 3.2}
 \end{equation}
  Obviously, $J^{\pi^1,\pi^2}(0,x)\geq 1$ for $x\in S$ and $(\pi^1,\pi^2)\in\Pi^1\times\Pi^2$.\\
 Define $J_{*}(t,x):=\sup_{\pi^1\in \Pi_m^1}\inf_{\pi^2\in \Pi_m^2}J^{\pi^1,\pi^2}(t,x)$.
 Note that
 $(\pi^{*1},\pi^{*2})\in\Pi^1\times\Pi^2$ is optimal if and only if
 \begin{align*}
\sup_{\pi^1\in \Pi^1}\inf_{\pi^2\in \Pi^2}J^{\pi^{1},\pi^{2}}(0,x)=\inf_{\pi^2\in \Pi^2}\sup_{\pi^1\in \Pi^1}J^{\pi^{1},\pi^{2}}(0,x)=\inf_{\pi^2\in \Pi^2}J^{\pi^{*1},\pi^{2}}(0,x)=\sup_{\pi^1\in \Pi^1}J^{\pi^{1},\pi^{*2}}(0,x) ~\forall x\in S.
 \end{align*}
 Since the cost rates may be unbounded, the Assumption $3.1(iii)$ is used to guarantee the finiteness of $J^{\pi^1,\pi^2}(t,x)$.
The following Lemma shows the non-exlposion of the state process $\{\xi_t, t\geq 0\}$ and the finiteness of $J^{\pi^1,\pi^2}(t,x)$.
\begin{lemma}\label{lemm 3.1}
Suppose  Assumption \ref{assm 3.1} holds. Then for each pair of strategies $(\pi^1,\pi^2)\in \Pi^1\times\Pi^2,$ the following assertions hold.
	\begin{enumerate}
		\item [(a)] $P^{\pi^1,\pi^2}_x(T_\infty=\infty)=1$, $P^{\pi^1,\pi^2}_x(\xi_t\in S)=1$, and $P^{\pi^1,\pi^2}_x(\xi_0=x)=1$ for each $x\in S$.
		
				\item [(b)] $E^{\pi^1,\pi^2}_x[V(\phi(\xi_t,\hat{t}))]\leq e^{\rho_1 t} [V(\phi(x,t+\hat{t}))+\frac{b_1}{\rho_1}]$,  for each $\hat{t},t\geq 0$ and  $x\in S$.
				\item [(c)] 
				$E^{\pi^1,\pi^2}_{(s,x)}[V(\phi(\xi_t,\hat{t}))]\leq e^{\rho_1 (t-s)} [V(\phi(x,t-s+\hat{t}))+\frac{b_1}{\rho_1}]$,  for each $t\geq s\geq 0$, $\hat{t}\geq 0$ and  $x\in S$ and $(\pi^1,\pi^2)\in \Pi_m^1\times\Pi_m^2$.
		
		\item[(d)]
		\begin{enumerate}
			\item [($d_1$)]  $e^{-L_1 \lambda V(\phi(x,T))}< J^{\pi^1,\pi^2}(0,x)\leq L_1V(\phi(x,T))$ for $x\in S$ and $(\pi^1,\pi^2) \in \Pi^1\times \Pi^2$, where $L_1:=M_2e^{\rho_1 T} [1+\frac{b_1}{\rho_1}]$.
			\item [$(d_2)$]  $e^{-L_2 \lambda V(\phi(x,T-t))}< J^{\pi^1,\pi^2}(t,x)\leq L_2V(\phi(x,T-t))$  for $(t,x)\in [0,T]\times S$ and $(\pi^1,\pi^2)\in \Pi_m^1\times\Pi_m^2$ where $L_2:=M_2e^{\rho_1 (T-t)} [1+\frac{b_1}{\rho_1}]$.				\end{enumerate}
			\end{enumerate}
		\end{lemma}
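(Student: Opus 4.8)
The plan is to establish the four assertions in order, since each relies on the previous ones. For part (a), I would use the drift condition Assumption \ref{assm 3.1}(i) together with the localizing sequence $\{S_n\}$ from Assumption \ref{assm 3.1}(ii) to run a standard non-explosion argument. Specifically, I would apply Dynkin's formula (or the Dynkin-type identity for the marked point process governed by the random measure $m^{\pi^1,\pi^2}$) to the function $e^{-\rho_1 t}V(\phi(\xi_t, \hat t))$ stopped at $T_n \wedge T$, obtaining that $E^{\pi^1,\pi^2}_x[e^{-\rho_1(T_n\wedge t)}V(\phi(\xi_{T_n\wedge t},\hat t))]$ is bounded by $V(\phi(x, t+\hat t)) + b_1/\rho_1$ (handling the degenerate case $\rho_1 = 0$ separately by a linear-in-$t$ bound). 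Letting $n\to\infty$, since $\inf_{x\notin S_n}V(\phi(x,t))\to\infty$, the mass on $\{T_n \le t\}$ must vanish, forcing $P^{\pi^1,\pi^2}_x(T_\infty = \infty) = 1$; the other two identities in (a) follow from the construction of $\{\xi_t\}$ in \eqref{eq 2.3} and the definition of $P^{\pi^1,\pi^2}_x$.

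For parts (b) and (c), once non-explosion is known, the stopped estimate from the previous step passes to the limit by Fatou's lemma on the left and dominated/monotone convergence on the right, yielding $E^{\pi^1,\pi^2}_x[V(\phi(\xi_t,\hat t))] \le e^{\rho_1 t}[V(\phi(x, t+\hat t)) + b_1/\rho_1]$. Here I would use the flow's semigroup property $\phi(x, s+t) = \phi(\phi(x,s),t)$ to rewrite $\phi(\phi(\xi_t, \text{time since last jump}),\hat t)$ consistently, so that the function being tracked is genuinely $V\circ\phi$ along the trajectory. Part (c) is the same computation started from time $s$ rather than $0$, using the Markov property of $\{\xi_t\}$ under Markov strategy pairs (noted after \eqref{eq 2.6}); the shift $t \mapsto t - s$ is purely bookkeeping.

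For part (d), the upper bounds are immediate consequences of (b)–(c) combined with Assumption \ref{assm 3.1}(iii): inside the exponential in \eqref{eq 3.1}, the integral $\lambda\int_0^T c\,ds + \lambda g(T,\xi_T)$ is bounded above by (up to the factor $\tfrac{\lambda}{2(T+1)} \le \tfrac12$) a sum of logarithms of $V(\phi(\xi_s, T-s))$-type quantities. More precisely I would bound $e^{\lambda\int_0^T c\,dt + \lambda g(T,\xi_T)}$ crudely by $\sup_{s\in[0,T]}$ of the exponential bound coming from (iii), or — cleaner — use the elementary inequality $e^{\frac{1}{2(T+1)}(\int_0^T |c|\,dt + |g|)} \le \frac{1}{T+1}\int_0^T e^{|c|/(T+1)}\,dt + \dots$ is not quite right; instead apply Jensen/Hölder to split $e^{\lambda\int_0^T c\,dt}$ as a time-average $\frac1T\int_0^T e^{\lambda T c(t,\xi_t,\cdot,\cdot)}\,dt$ after writing $\lambda\int_0^T c\,dt = \int_0^T \lambda T c \cdot \frac{dt}{T}$, bound each factor $e^{\lambda T |c|} \le e^{2(T+1)|c|} \le M_2 V(\phi(\xi_t, T-t))$ and similarly for $g$, then take expectations and invoke (b) with $\hat t = T - t$. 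This yields $J^{\pi^1,\pi^2}(0,x) \le M_2 e^{\rho_1 T}[1 + b_1/\rho_1]V(\phi(x,T)) = L_1 V(\phi(x,T))$, and the analogous argument with \eqref{eq 3.2} and part (c) gives $(d_2)$. The lower bound $J^{\pi^1,\pi^2}(0,x) > e^{-L_1\lambda V(\phi(x,T))}$ follows from Jensen's inequality: $J^{\pi^1,\pi^2}(0,x) \ge e^{\lambda E^{\pi^1,\pi^2}_x[\int_0^T c\,dt + g(T,\xi_T)]} \ge e^{-\lambda E^{\pi^1,\pi^2}_x[\int_0^T |c|\,dt + |g(T,\xi_T)|]}$, and then bound $|c|, |g|$ by $\tfrac{1}{2(T+1)}\log(M_2 V(\phi(\cdot, T-\cdot)))$ from (iii), which since $M_2 \le M_2 V \le$ something controlled and $V \ge 1$ gives a linear-in-$V$ bound after applying (b) — the constant works out to $L_1$ by the same arithmetic.

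The main obstacle I anticipate is the non-explosion argument in part (a): making the Dynkin-formula step rigorous requires knowing that $e^{-\rho_1 t}V(\phi(\xi_t,\hat t))$ is in the domain of the extended generator of the PDMDP under a general admissible (not necessarily Markov) strategy, and that the local-martingale part is a true martingale up to the stopping times $T_n \wedge T$. This is where the localizing sequence from (ii) is essential. The rest is careful but routine manipulation of the flow semigroup identity and elementary convexity inequalities (Jensen, Hölder) to pull the exponential cost criterion down to the drift estimates in (b) and (c).
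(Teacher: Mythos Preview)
Your plan is correct and aligns with the paper's proof. For parts (a)--(c) the paper simply cites \cite{HLG_ORL_2018} and \cite[Proposition 3.1]{HG_ST_2019}, whose content is exactly the Dynkin/localization argument you sketched; for part (d) both you and the paper use Jensen's inequality in each direction together with Assumption~\ref{assm 3.1}(iii) and the drift bound (b).

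The one place your write-up is slightly looser than the paper is the Jensen step for the upper bound: rather than splitting $e^{\lambda\int_0^T c\,dt}\cdot e^{\lambda g}$ and handling the two factors separately (which, as you noticed mid-paragraph, is awkward), the paper introduces the finite measure $\beta(dt):=I_{[0,T)}(t)\,dt+\delta_T(dt)$ of total mass $T+1$ and the combined cost $\tilde c:=cI_{[0,T)}+gI_{\{T\}}$, so that
\[
\lambda\!\int_0^T c\,dt+\lambda g(T,\xi_T)=\int_{[0,T]}\lambda(T{+}1)\tilde c(t,\xi_t,\cdot)\,\frac{\beta(dt)}{T+1},
\]
and a single application of Jensen against the probability measure $\beta(dt)/(T+1)$ gives the time-averaged bound directly. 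This is what makes the factor $2(T+1)$ in Assumption~\ref{assm 3.1}(iii) line up exactly with the constant $L_1$; your approach would work too but with a less transparent constant.
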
 
		\begin{proof}
For parts (a), (b), and (c) see  \cite{HLG_ORL_2018}, \cite[Proposition 3.1]{HG_ST_2019}.\\
We now prove part (d). For almost surely $\omega:=(x_0,\theta_1,x_1,\cdots,\theta_{k},x_k,\cdots)\in\Omega$, let $k$ (depending on $\omega$) be determined by $T_k(\omega)\leq T<T_{k+1}(\omega)$. Now by equation (\ref{eq 2.3}) and part $(a)$, we say that $k$ is finite, and $\{\xi_t(\omega),t\in [0,T]\}=\{X_0(\omega),\cdots,X_k(\omega)\}$. Now by Assumption \ref{assm 3.1}(iii), and the fact that $\theta_{i+1}=T_{i+1}(\omega)-T_{i}(\omega)$ $(i=0,\cdots,k-1)$, we have
\begin{align*}
&\int_{0}^{T}\int_{B}\int_{A}|c(t,\xi_t,a,b)|\pi^1(da|\omega,t)\pi^2(db|\omega,t)dt\\
&\leq \frac{\sum_{i=0}^{k-1}\theta_{i+1}\log\sqrt{M_2V(\phi(X_i(\omega),T-T_k(\omega)))}+(T-T_k(\omega))\log\sqrt{M_2V(\phi(X_k(\omega),T-T_k(\omega)))}}{(T+1)\lambda}.
\end{align*}
Hence we conclude that 
$\int_{0}^{s}\int_{B}\int_{A}c(t,\xi_t,a,b)\pi^1(da|\omega,t)\pi^2(db|\omega,t)dt$ is real-valued Borel-measurable in $s\in [0,T]$.
 Let $\beta(dt):=I_{[0,T)}(t)dt+\delta_T(dt)$, with $\delta_T(dt)$ being the Dirac measure concentrated on $\{T\}$, and $\tilde{c}(t,x,a,b):=c(t,x,a,b)I_{[0,T)}(dt)+g(T,x)I_{T}(t)$ for each $(t,x,a,b)\in K.$  So for $P^{\pi^1,\pi^2}_x$-almost all $\omega\in \Omega$, by Assumption \ref{assm 3.1}(iii), we can write
\begin{align*}
&\int_{0}^{T}\int_{B}\int_{A}c(t,\xi_t,a,b)\pi^1(da|\omega,t)\pi^2(db|\omega,t)dt+g(T,\xi_T)\\
&=\int_{(0,T]}\int_{B}\int_{A} \tilde{c}(t,\xi_t,a,b)\pi^1(da|\omega,t)\pi^2(db|\omega, t)\beta(dt).
\end{align*}
Now, since $\lambda\leq 1$ and $V(x)\geq 1$,
\interdisplaylinepenalty=0
	\begin{align}
	&E^{\pi^1,\pi^2}_x\biggl[e^{\lambda\int_{0}^{T}\int_{B}\int_{A}c(t,\xi_t,a,b)\pi^1(da|\omega,t)\pi^2(db|\omega,t)dt+\lambda g(T,\xi_T)}\biggr]\nonumber\\
	&=E^{\pi^1,\pi^2}_x\biggl[e^{\lambda\int_{[0,T]}\int_{B}\int_{A}(1+T)\tilde{c}(t,\xi_t,a,b)\pi^1(da|\omega,t)\pi^2(db|\omega,t)\frac{\beta(dt)}{T+1}}\biggr]\nonumber\\
   & \leq 	E^{\pi^1,\pi^2}_x\biggl[\frac{1}{T+1}\int_{[0,T]}e^{\lambda(1+T)\int_{B}\int_{A}|\tilde{c}(t,\xi_t,a,b)|\pi^1(da|\omega,t)\pi^2(db|\omega,t)}\beta(dt)\biggr]\nonumber\\
   &\quad (\text{by the Jensen inequality})\nonumber\\
	&\leq \frac{M_2 }{1+T}E^{\pi^1,\pi^2}_x\biggl[\int_{0}^{T}V(\phi(\xi_t,T-t))dt+V(\phi(\xi_T))\biggr]\nonumber\\
	&\quad (\text{by Assumption \ref{assm 3.1}(iii)})\nonumber\\
	&\leq M_2e^{\rho_1 T} [V(\phi(x,T))+\frac{b_1}{\rho_1}]\nonumber\\
	&\leq M_2e^{\rho_1 T} [1+\frac{b_1}{\rho_1}]V(\phi(x,T))\nonumber\\
	&= L_1 V(\phi(x,T)),~L_1:=M_2e^{\rho_1 T} [1+\frac{b_1}{\rho_1}],\label{eq 3.3}
\end{align}
where the $3$rd inequality is by part (b).
Also, $(T+1)|c(t,x,a,b)|\leq e^{2(T+1)|c(t,x,a,b)|}\leq M_2 V(\phi(x,T-t)) $ and $(T+1)|g(t,x)|\leq e^{2(T+1)|g(t,x)|}\leq M_2 V(\phi(x,T-t)). $ Hence
\begin{align*}
J^{\pi^1,\pi^2}(0,x)&= {E}^{\pi^1,\pi^2}_x\biggl[\exp\biggl({\lambda\int_{0}^{T}\int_{B}\int_{A}c(t,\xi_t,a,b)\pi^1(da|\omega,t)\pi^2(db|\omega,t)dt+\lambda g(T,\xi_T)}\biggr)\biggr]\\
&\geq \exp\biggl({E}^{\pi^1,\pi^2}_x\biggl[{\lambda\int_{0}^{T}\int_{B}\int_{A}c(t,\xi_t,a,b)\pi^1(da|\omega,t)\pi^2(db|\omega,t)dt+\lambda g(T,\xi_T)}\biggr]\biggr)\\
&\geq \exp\biggl(-\frac{1}{T+1}{E}^{\pi^1,\pi^2}_x\biggl[\lambda\int_{0}^{T}\int_{B}\int_{A}M_2V(\phi(\xi_t,T-t))\pi^1(da|\omega,t)\pi^2(db|\omega,t)dt+\lambda M_2V(\xi_T)\biggr]\biggr)\\
&= \exp\biggl(-\frac{M_2\lambda}{(T+1)}{E}^{\pi^1,\pi^2}_x\biggl[\int_{0}^{T}V(\phi(\xi_t,T-t))dt+ V(\xi_T)\biggr]\biggr)\\
&\geq \exp\biggl(-\frac{M_2\lambda}{(T+1)}\biggl[\int_{0}^{T}e^{\rho_1 T} [V(\phi(x,T)+\frac{b_1}{\rho_1}]dt+ e^{\rho_1 T} [V(\phi(x,T))+\frac{b_1}{\rho_1}]\biggr]\biggr)\\
&\geq e^{-L_1 \lambda V(\phi(x,T)}.
\end{align*}
\end{proof}
To study the large enough class of functions and admissible strategies, we need an extension of Feynman-Kac formula. To that end we imposed the following condition. 
\begin{assumption}\label{assm 3.2}
 	There exist a $[1,\infty)$-valued function $V_1$ on $S$, and constants $\kappa>0$, $\rho_2> 0$, $M_3\geq 1$ and $b_2>0$ such that
 	\begin{enumerate}
 		\item[(i)] $q(s,x,a,b)\leq \kappa V(\phi(x,T-s))$, $V^2(x)\leq M_3V_1(x)$, for all $(s,x,a,b)\in K_{[0,T]}$, with the function  $V$ as in Assumption \ref{assm 3.1};
 		\item [(ii)] $\int_{S}V_1^2(\phi(y,t))q(dy |s, x, a,b)\leq \rho_2 V_1^2(\phi(x,t))+b_2$ for each $(t,x, a,b)\in K$, for all $(s,x,a,b)\in K_{[0,T]}$, and $t\in [0,T-s]$.
 	
		\end{enumerate}
 \end{assumption}
We first introduce some frequently used notations. 

Given any real-valued function $W\geq 1$ on $S$ and any Borel set $[0,T]$, a real-valued function $u$ on $[0,T]\times S$ is called $W$-bounded if $\|u\|^\infty_{W}:=\sup_{(t,x)\in [0,T]\times S}\frac{|u(t,x)|}{W(\phi(x,T-t))}< \infty$. Denote $B_{W}([0,T]\times S)$ the Banach space of all $W$-bounded functions. When $W\equiv1$, $B_{1}([0,T]\times S)$ is the space of all bounded functions on $[0,T]\times S.$

Now define, $\mathbb{B}^{ac}_W([0,T]\times S)$ be the collection of all measurable functions $\varphi$ on $[0,T]\times S$ such that $\|\varphi\|_W<\infty$, and for all $(s,x)\in [0,T]\times S$, $\varphi(s+t,\phi(x,t))$ is absolutely continuous in $t\in [0,T-s]$. For a function $\varphi\in \mathbb{B}^{ac}_W([0,T]\times S)$, by \cite[Lemma  2.2]{CDP_SIAM_2016}, there is some measurable function $L^{\phi}\varphi$ on $[0,T]\times S$ satisfying
$$\varphi(s+t,\phi(x,t))-\varphi(s,x)=\int_{0}^{t}L^\phi\varphi(s+v,\phi(x,v))dv~\forall t\in [0,T-s].$$
Here, the function $L^{\phi}\varphi(s+v,\phi(x,v))$ on $[0,T-s]\times S$ coincides with the partial derivative, $\frac{\partial \varphi(s+v,\phi(x,v))}{\partial v}$, of the function $\varphi(s+v,\phi(x,v))$ in $v\in [0,T-s]\times S$ apart from on a null set $Z_\varphi(s,x)\subset [0,T-s]$ with respect to the Lebesgue measure. For such a function $\varphi$, let
$$\mathscr{D}^{\varphi}:=\{(s+t,\phi(x,t))\in [0,T]\times S:t\in Z^c_\varphi(s,x)\}, (s,x)\in [0,T]\times S.$$ Then, the function $L^{\phi}\varphi$ on $[0,T]\times S$ can be defined as below:
	\begin{align}
L^\phi\varphi(s,x):=\left\{ \begin{array}{llll}&\lim_{\Delta s\rightarrow 0}\frac{\varphi(s+\Delta s,\phi(x,\Delta s))-\varphi(s,x)}{\Delta s},~\text{if}~(s,x)\in \mathscr{D}^{\varphi},\\&\text{ arbitrary},~\text{otherwise}.\label{eq 3.4n}
\end{array}\right.
\end{align}
In particular, if $\phi(x,t)\equiv x$ in which case a PDMDP becomes a CTMDP, $L^\phi\varphi(s,x)=\frac{\partial \varphi(s,x)}{\partial s}$, for $(s,x)\in \mathscr{D}^{\varphi}$.
When Assumption \ref{assm 3.2} (ii) is further imposed, Lemma \ref{lemm 3.1}(b) holds with $V$ being replaced by $V^2_1$. Moreover, for $\varphi\in \mathbb{B}^{ac}_W([0,T]\times S)$, we let
$$\|L^\phi\varphi\|^{es}_{V_1}:=\sup_{(s,x)\in \mathscr{D}^{\varphi}}|L^\phi\varphi(s,x)|/{V_1(\phi(x,T-s))},$$ and $$\mathbb{B}^{ac}_{V,V_1}([0,T]\times S):=\{\varphi\in  \mathbb{B}^{ac}_V([0,T]\times S): \|L^\phi\varphi\|^{es}_{V_1}<\infty \}.$$


\theoremstyle{definition}
 \begin{thm}\label{thm 3.1}
 	Suppose Assumptions \ref{assm 3.1} and \ref{assm 3.2} hold.
 \begin{itemize}
 	\item [(a)]  For each $x\in S$, $(\pi^1,\pi^2)\in \Pi^1\times\Pi^2$ and $\varphi\in \mathbb{B}^{ac}_{V,V_1}([0,T]\times S),$ let
 	$$\psi(\omega,t,x):=e^{\int_{0}^{t}\int_A\int_B c(v,\xi_v(\omega),a,b)\pi^1(da|\omega,v)\pi^2(db|\omega,v)dv}\varphi(t,x),~\forall \omega\in \{\omega^{'}\in \Omega|T_{\infty}(\omega^{'})=\infty\}.$$ 
 	\begin{align*}
 	&E^{\pi^1,\pi^2}_x\biggl[\int_{0}^{T}\biggl(L^\phi\psi(\omega,t,\xi_t)+\int_{S}\psi(\omega,t,y)\int_{B}\int_{A}q(dy|t,\xi_t,a,b)\pi^1(da|\omega,t)\pi^2(db|\omega,t)\biggr)dt\biggr]\\
 	&=E^{\pi^1,\pi^2}_x[\psi(\omega,T,\xi_T)]-E^{\pi^1,\pi^2}_x[\psi(\omega,0,x)],
 	\end{align*}
 	where $\psi$ is viwed as a function of the last two arguments when the operator $L^\phi$ is applied and $\{\xi_t,t\geq 0\}$ may be not Markovian since the pair of strategies $(\pi^1,\pi^2)$ may depend on histories.
 	\item [(b)] For each $x\in S$, $(\pi^1,\pi^2)\in \Pi^1_m\times\Pi^2_m$ and $\varphi\in \mathbb{B}^{ac}_{V,V_1}([0,T]\times S),$ let
 	$$\psi(\omega,t,x):=e^{\int_{0}^{t}\int_A\int_B c(v,\xi_v(\omega),a,b)\pi^1(da|\xi_v,v)\pi^2(db|\xi_v,v)dv}\varphi(t,x),~\forall \omega\in \{\omega^{'}\in \Omega|T_{\infty}(\omega^{'})=\infty\}, t\in [s,T].$$ Then, we have 
 	\begin{align*}
 	&E^{\pi^1,\pi^2}_{(s,x)}\biggl[\int_{s}^{T}\biggl(L^\phi\psi(\omega,t,\xi_t)+\int_{S}\psi(\omega,t,y)\int_{B}\int_{A}q(dy|t,\xi_t,a,b)\pi^1(da|\xi_t,t)\pi^2(db|\xi_t,t)\biggr)dt\biggr]\\
 	&=E^{\pi^1,\pi^2}_{(s,x)}[\psi(\omega,T,\xi_T)]-E^{\pi^1,\pi^2}_{(s,x)}[\psi(\omega,0,\xi_s)].
 	\end{align*}
\end{itemize}
\end{thm}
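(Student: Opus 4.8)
The plan is to prove the identity first along a fixed sample path and then take expectations, exploiting that $m^{\pi^1,\pi^2}(\cdot|\omega,t)\,dt$ from (\ref{eq 2.4}) is the dual predictable projection of the jump measure. First I would work on the event $\{T_\infty=\infty\}$, which has full probability by Lemma \ref{lemm 3.1}(a), and fix $\omega=(x_0,\theta_1,x_1,\dots)$; on $[0,T]$ there are only finitely many jump epochs, say $0=T_0<T_1<\dots<T_N\le T<T_{N+1}$, and between them $\xi_t=\phi(X_n,t-T_n)$ by (\ref{eq 2.3}). Writing $C(\omega,t):=\int_0^t\int_A\int_B c(v,\xi_v,a,b)\pi^1(da|\omega,v)\pi^2(db|\omega,v)\,dv$, one checks as in the proof of Lemma \ref{lemm 3.1}(d) (via Assumption \ref{assm 3.1}(iii)) that $t\mapsto C(\omega,t)$ is absolutely continuous on $[0,T]$. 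Combined with $\varphi\in\mathbb{B}^{ac}_{V,V_1}([0,T]\times S)$ and the existence of $L^\phi\varphi$ from \cite[Lemma 2.2]{CDP_SIAM_2016}, the map $t\mapsto\psi(\omega,t,\xi_t)=e^{C(\omega,t)}\varphi(t,\phi(X_n,t-T_n))$ is absolutely continuous on each $[T_n,T_{n+1}\wedge T]$ with a.e.\ derivative $L^\phi\psi(\omega,t,\xi_t)$, the operator $L^\phi$ of (\ref{eq 3.4n}) acting on $\psi$ in its last two arguments and picking up exactly the flow motion, the $t$-derivative of $\varphi$, and $\dot C(\omega,t)\varphi$. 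Hence, since $e^{C(\omega,\cdot)}$ is continuous at each $T_n$,
\begin{align*}
\psi(\omega,T,\xi_T)-\psi(\omega,0,x)
&=\sum_{n=0}^{N}\bigl[\psi(\omega,T_{n+1}\wedge T,\xi_{(T_{n+1}\wedge T)-})-\psi(\omega,T_n,\xi_{T_n})\bigr]\\
&\quad+\sum_{n=1}^{N}\bigl[\psi(\omega,T_n,\xi_{T_n})-\psi(\omega,T_n,\xi_{T_n-})\bigr]\\
&=\int_0^T L^\phi\psi(\omega,t,\xi_t)\,dt+\int_{(0,T]\times S}\bigl[\psi(\omega,t,y)-\psi(\omega,t,\xi_{t-})\bigr]\,\mu(dt,dy),
\end{align*}
where $\mu:=\sum_{n\ge1}\delta_{(T_n,X_n)}$.

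Next I would take $E^{\pi^1,\pi^2}_x$ on both sides. By the definition of the dual predictable projection, for a predictable integrand satisfying the integrability established below one has $E^{\pi^1,\pi^2}_x\bigl[\int_{(0,T]\times S}h\,d\mu\bigr]=E^{\pi^1,\pi^2}_x\bigl[\int_0^T\int_S h\,m^{\pi^1,\pi^2}(dy|\omega,t)\,dt\bigr]$ (equivalently, the compensated integral is a mean-zero martingale). Applying this with $h(\omega,t,y)=\psi(\omega,t,y)-\psi(\omega,t,\xi_{t-})$, and using that $q(\cdot|t,x,a,b)$ is conservative so that the restriction $I_{\{\xi_{t-}\notin dy\}}$ in (\ref{eq 2.4}) together with the subtracted term $-\psi(\omega,t,\xi_t)$ exactly cancels the atom of $q$ at $\xi_t$, the compensator collapses to $\int_0^T\int_S\psi(\omega,t,y)\int_B\int_A q(dy|t,\xi_t,a,b)\pi^1(da|\omega,t)\pi^2(db|\omega,t)\,dt$. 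Rearranging the resulting equality gives the assertion of part (a); part (b) is obtained by the same argument carried out on $[s,T]$ for $(\pi^1,\pi^2)\in\Pi^1_m\times\Pi^2_m$, where $\{\xi_t\}$ is additionally Markov but the computation is unchanged.

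The main obstacle is the integrability bookkeeping needed to justify (i) finiteness of all the integrals and (ii) the passage from a local martingale to a genuine mean-zero martingale (equivalently, the validity of the compensation formula). One has to bound, e.g., $E^{\pi^1,\pi^2}_x\bigl[\int_0^T\int_S|\psi(\omega,t,y)|\,|q|(dy|t,\xi_t,a,b)\pi^1(da|\omega,t)\pi^2(db|\omega,t)\,dt\bigr]$ and $E^{\pi^1,\pi^2}_x[\sup_{t\le T}|\psi(\omega,t,\xi_t)|]$, and here $|\psi(\omega,t,y)|=e^{C(\omega,t)}|\varphi(t,y)|\le e^{C(\omega,t)}\|\varphi\|^\infty_V\,V(\phi(y,T-t))$ still carries the path-dependent cost exponential. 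This is exactly what Assumption \ref{assm 3.2} is designed to control: the bounds $q(s,x,a,b)\le\kappa V(\phi(x,T-s))$ and $V^2\le M_3V_1$, together with the $V_1^2$-drift inequality, let one separate $e^{C(\omega,t)}$ (estimated through Assumption \ref{assm 3.1}(iii)) from the $V$- or $V_1$-growth by the Cauchy--Schwarz inequality, after which Lemma \ref{lemm 3.1}(b) with $V$ replaced by $V_1^2$ (as noted after (\ref{eq 3.4n})) supplies the required finite moment bounds. A standard truncation of the upper limit at $T_n\wedge T$, followed by $n\to\infty$ and dominated convergence, removes the localization; the remaining steps are routine.
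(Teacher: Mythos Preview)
Your argument is sound and follows the standard route for establishing Feynman--Kac/Dynkin formulas for piecewise deterministic processes: a pathwise telescoping over inter-jump intervals yields an integral against the jump measure $\mu=\sum_{n\ge1}\delta_{(T_n,X_n)}$, and then the compensation formula (dual predictable projection, cf.\ (\ref{eq 2.4})) converts this into the $q$-integral term, with Assumptions \ref{assm 3.1} and \ref{assm 3.2} supplying the moment bounds (via Cauchy--Schwarz separating $e^{C(\omega,t)}$ from the $V$-growth, and Lemma \ref{lemm 3.1}(b) applied to $V_1^2$) needed to pass from a local to a true martingale.

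Note, however, that the paper itself does not supply a proof of this theorem: it simply refers the reader to \cite[Theorem 3.1]{HLG_ORL_2018}. What you have written is essentially the proof one finds there (adapted to the two-player setting, which is cosmetic since the strategies enter only through the integrated kernels). So there is no genuine difference in method to compare---you have reconstructed the argument that the paper outsources. The one place worth tightening is the predictability of $h(\omega,t,y)=\psi(\omega,t,y)-\psi(\omega,t,\xi_{t-})$: you should remark that $e^{C(\omega,t)}$ is $\mathscr{P}$-measurable because $C(\omega,\cdot)$ is left-continuous and adapted, so the compensation identity applies. Everything else is in order.
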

\begin{proof}
	For proof, see \cite[Theorem 3.1]{HLG_ORL_2018}.
	\end{proof}
The next statement is a verification theorem, which provides that the solution of the risk-sensitive finite-horizon optimality equation (Shapley equation) has a stochastic representation.
\begin{corollary}\label{corr 3.1}
 Suppose Assumptions \ref{assm 3.1} and \ref{assm 3.2} are satisfied.
  If there exist a function $\varphi\in \mathbb{B}^{ac}_{V,V_1}([0,T]\times S),$ such that
 \interdisplaylinepenalty=0
 \begin{align}
 &L^{\phi}\varphi(t,x)+\biggl\{\lambda c(t,x,\mu,\nu)\varphi(t,x)\nonumber\\
 &+\int_{S}\varphi(t,y)q(dy|t,x,\mu,\nu)\biggr\}\geq (\leq )0,~\forall (\mu,\nu)\in P(A(t,x))\times P(B(t,x)), (t,x)\in \mathscr{D}^{\varphi},\nonumber\\
 &\text{ where}~\varphi(T,x)=e^{g(T,x)},~~~x\in S,\label{eq 3.13}
 \end{align}
 then 
 \begin{enumerate}
 	\item [(a)]

\interdisplaylinepenalty=0
\begin{align}
\varphi(0,x) \leq(\geq ) J^{\pi^1,\pi^2}(0,x),\forall (\pi^1,\pi^2)\in \Pi^1\times \Pi^2, \text{and}~x\in S.\label{eq 3.14}
\end{align}
\item  [(b)] 
\interdisplaylinepenalty=0
\begin{align}
\varphi(t,x) \leq(\geq ) J^{\pi^1,\pi^2}(t,x), \forall (\pi^1,\pi^2)\in \Pi_m^1 \times  \Pi_m^2,  \text{and}~(t,x)\in [0,T]\times S. \label{eq 3.15}
\end{align}
\end{enumerate}
\begin{proof}
	\begin{enumerate}
		\item [(a)] 	For almost all $\omega\in \Omega$ with respect to ${P}^{\pi^1,\pi^2}_x$ and almost all $t\in [0,T]$, $(t,\xi_t)\in \mathscr{D}^{\varphi}$, using equation (\ref{eq 3.13}), we have 
		\interdisplaylinepenalty=0
		
	 \begin{align}
	&L^{\phi}\varphi(t,x)+\biggl\{\lambda c(t,x,\mu,\nu)\varphi(t,x)\nonumber\\
	&+\int_{S}\varphi(t,y)q(dy|t,x,\mu,\nu)\biggr\}\geq 0,~\forall (\mu,\nu)\in P(A(t,x))\times P(B(t,x)), (t,x)\in \mathscr{D}^{\varphi},\nonumber\\
	&\text{ where}~\varphi(T,x)=e^{g(T,x)},~~~x\in S.\label{eq 3.16}
	\end{align}

		Let $(\pi^1,\pi^2)\in \Pi^1\times \Pi^2$, be arbitrarily fixed pair of strategies. Now by Theorem \ref{thm 3.1}, we have the Feynman-Kac formula, corresponding the pair of strategies $(\pi^{1},\pi^2)$, as
		\interdisplaylinepenalty=0
		\begin{align}
		&J^{\pi^{1},\pi^2}(0,x)-\varphi(0,x)\nonumber\\
		&=E^{\pi^{1},\pi^2}_x\biggl[e^{\lambda \int_{0}^{T}\int_{B}\int_{A}c(v,\xi_v,a,b)\pi^{1}(da|\omega,v)\pi^2(db|\omega,v)dv}\varphi(T,\xi_T)\biggr]-\varphi(0,x)\nonumber\\
		&=E^{\pi^{1},\pi^2}_x\biggl[\int_{0}^{T}e^{\lambda \int_{0}^{t}\int_{B}\int_{A}c(v,\xi_v,a,b)\pi^{1}(da|\omega,v)\pi^2(db|\omega,v)dv}\int_{B}\int_{A}\biggl(\lambda c(t,\xi_t,a,b)\varphi(t,\xi_t)+L^\phi\varphi(t,\xi_t)\nonumber\\
		&\quad+\int_{S}\varphi(t,y)q(dy|t,\xi_t,a,b)\biggr)\pi^{1}(da|\omega,t)\pi^2(db|\omega,t)dt\biggr]\label{eq 3.17},
		\end{align}
		here the 1st equality holds because $\varphi(T,x)=e^{\lambda g(T,x)}$, see (\ref{eq 3.13}).\\
		From (\ref{eq 3.16}) and (\ref{eq 3.17}), we have
		$J^{\pi^{1},\pi^2}(0,x)\geq \varphi(0,x)$.
		\item [(b)] The proof of this part is similar as part (a).	
	\end{enumerate}

\end{proof}

\end{corollary}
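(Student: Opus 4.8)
The plan is to obtain both statements from the extended Feynman--Kac formula of Theorem~\ref{thm 3.1}, applied not to $\varphi$ itself but to the exponentially weighted process built from it. Fix an arbitrary pair $(\pi^1,\pi^2)$ and set
$$\psi(\omega,t,x):=e^{\lambda\int_{0}^{t}\int_{B}\int_{A}c(v,\xi_v(\omega),a,b)\pi^1(da|\omega,v)\pi^2(db|\omega,v)\,dv}\,\varphi(t,x)$$
on $\{T_\infty=\infty\}$, which is precisely the function treated in Theorem~\ref{thm 3.1}(a) (one uses that theorem with $\lambda c$ in the exponent, still admissible since $\lambda\le1$ and Assumption~\ref{assm 3.1}(iii) holds). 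Because $\varphi\in\mathbb{B}^{ac}_{V,V_1}([0,T]\times S)$ by hypothesis and Assumptions~\ref{assm 3.1}--\ref{assm 3.2} are in force, Lemma~\ref{lemm 3.1}(b)--(d) supplies all the integrability needed to legitimately invoke Theorem~\ref{thm 3.1}.

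The key step is a product rule for $L^\phi$. Writing $m(\omega,t):=e^{\lambda\int_{0}^{t}\int_{B}\int_{A}c(v,\xi_v,a,b)\pi^1(da|\omega,v)\pi^2(db|\omega,v)\,dv}$, which depends on $\omega$ and $t$ but not on the spatial variable, one has along the flow
$$L^\phi\psi(\omega,t,\xi_t)=m(\omega,t)\Bigl[L^\phi\varphi(t,\xi_t)+\lambda c\bigl(t,\xi_t,\pi^1(\cdot|\omega,t),\pi^2(\cdot|\omega,t)\bigr)\varphi(t,\xi_t)\Bigr]$$
for $(t,\xi_t)\in\mathscr{D}^\varphi$, and likewise $\int_{S}\psi(\omega,t,y)\int_{B}\int_{A}q(dy|t,\xi_t,a,b)\pi^1(da|\omega,t)\pi^2(db|\omega,t)=m(\omega,t)\int_{S}\varphi(t,y)\,q(dy|t,\xi_t,\pi^1(\cdot|\omega,t),\pi^2(\cdot|\omega,t))$. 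Hence the integrand appearing inside the expectation in Theorem~\ref{thm 3.1}(a) is exactly $m(\omega,t)$ times the left-hand side of the differential inequality~(\ref{eq 3.13}), evaluated at the randomized actions $(\mu,\nu)$ prescribed by $(\pi^1,\pi^2)$ at time $t$. Since $(t,\xi_t)\in\mathscr{D}^\varphi$ for a.e.\ $t$, $P^{\pi^1,\pi^2}_x$-a.s., and $m(\omega,t)>0$, in the ``$\geq$'' case of~(\ref{eq 3.13}) this integrand is nonnegative $P^{\pi^1,\pi^2}_x$-a.s.

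Integrating over $[0,T]$ and applying Theorem~\ref{thm 3.1}(a) then gives $0\le E^{\pi^1,\pi^2}_x[\psi(\omega,T,\xi_T)]-E^{\pi^1,\pi^2}_x[\psi(\omega,0,x)]$. Now $m(\omega,0)=1$, so $\psi(\omega,0,x)=\varphi(0,x)$, and the terminal condition in~(\ref{eq 3.13}) yields $\psi(\omega,T,\xi_T)=e^{\lambda\int_{0}^{T}\int_{B}\int_{A}c(v,\xi_v,a,b)\pi^1(da|\omega,v)\pi^2(db|\omega,v)\,dv+\lambda g(T,\xi_T)}$, whose $E^{\pi^1,\pi^2}_x$-expectation is $J^{\pi^1,\pi^2}(0,x)$ by~(\ref{eq 3.1}). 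Therefore $\varphi(0,x)\le J^{\pi^1,\pi^2}(0,x)$, and reversing the sign in~(\ref{eq 3.13}) reverses this inequality, proving~(a). For~(b) I would run the identical argument with Theorem~\ref{thm 3.1}(b), using the exponential weight accumulated from time $t$ onward so that the initial boundary term is $\varphi(t,x)$, its terminal expectation is $J^{\pi^1,\pi^2}(t,x)$ by~(\ref{eq 3.2}), and invoking that $\{\xi_s\}$ is Markov under $P^{\pi^1,\pi^2}_{(t,x)}$ for $(\pi^1,\pi^2)\in\Pi_m^1\times\Pi_m^2$.

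The step I expect to be the real obstacle is not this calculation but confirming that $\psi$, and not just $\varphi$, is an admissible input for Theorem~\ref{thm 3.1}: one must check that the exponential weight keeps $\psi$ inside the function class on which the extended Feynman--Kac formula is valid, and that $L^\phi\psi$ is integrable against the relevant weight. This is exactly where the exponential bound in Assumption~\ref{assm 3.1}(iii), the domination $V^2\le M_3 V_1$, and the quadratic drift condition in Assumption~\ref{assm 3.2}(ii), together with Lemma~\ref{lemm 3.1}(b)--(d), do the work; once they are available, the differential inequality~(\ref{eq 3.13}) converts pointwise along trajectories into the stated one-sided comparison with no further estimation.
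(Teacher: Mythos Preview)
Your proposal is correct and follows essentially the same route as the paper: apply the extended Feynman--Kac formula of Theorem~\ref{thm 3.1} to the exponentially weighted function $\psi=m\varphi$, expand via the product rule so that the integrand becomes $m(\omega,t)$ times the left-hand side of~(\ref{eq 3.13}) evaluated along $(t,\xi_t)$, and read off the one-sided inequality from nonnegativity of that integrand; the paper's display~(\ref{eq 3.17}) is exactly this identity. One minor remark: the ``obstacle'' you flag---checking that $\psi$ is an admissible input---is not an additional step here, since Theorem~\ref{thm 3.1} is already \emph{stated} for $\psi$ of precisely this form with hypothesis only $\varphi\in\mathbb{B}^{ac}_{V,V_1}([0,T]\times S)$; the integrability you worry about is absorbed into the proof of Theorem~\ref{thm 3.1} itself, not into its application.
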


\begin{thm}\label{Theorem 3.2}
	Grant Assumptions \ref{assm 3.1} and \ref{assm 3.2}.
	For every pair of  Markov strategies $(\pi^{1},\pi^{2})\in \Pi_{m}^1\times\Pi_{m}^2$, $J^{\pi^1,\pi^{2}}(\cdot,\cdot)$ is a solution to the following differential equation
	\interdisplaylinepenalty=0
	\begin{align}
\left\{ \begin{array}{llll}&L^{\phi}\varphi(t,x)+ c(t,x,\pi^{1}(\cdot|x,t),\pi^{2}(\cdot|x,t))\varphi(t,x)+\int_{S}\varphi(t,y)q(dy|t,x,\pi^{1}(\cdot|x,t),\pi^{2}(\cdot|x,t))=0,\\&\varphi(T,x)=e^{g(T,x)}.\label{eq 3.13n}
\end{array}\right.
\end{align}
	\end{thm}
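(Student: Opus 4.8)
The plan is to prove directly that $\varphi(t,x):=J^{\pi^1,\pi^2}(t,x)$ belongs to $\mathbb{B}^{ac}_{V,V_1}([0,T]\times S)$ and satisfies (\ref{eq 3.13n}) (with $c$ and $g$ understood to carry the factor $\lambda$, as in (\ref{eq 3.2}) and (\ref{eq 3.13})). Fix $(\pi^1,\pi^2)\in\Pi_m^1\times\Pi_m^2$ and abbreviate the Markov rates $\hat c(t,x):=c(t,x,\pi^1(\cdot|x,t),\pi^2(\cdot|x,t))$, $\hat q(dy|t,x):=q(dy|t,x,\pi^1(\cdot|x,t),\pi^2(\cdot|x,t))$ and $\hat q(t,x):=-\hat q(\{x\}|t,x)$. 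The terminal condition $\varphi(T,x)=e^{\lambda g(T,x)}$ is immediate from (\ref{eq 3.2}), and Lemma~\ref{lemm 3.1}(d) already records the $V$-bound on $\varphi$. Moreover, by the flow identity $\phi(\phi(x,w),v)=\phi(x,w+v)$ together with Assumptions~\ref{assm 3.1}(iii) and \ref{assm 3.2}(i), for each fixed $(s,x)$ the functions $w\mapsto\hat q(s+w,\phi(x,w))$ and $w\mapsto\hat c(s+w,\phi(x,w))$ are bounded on $[0,T-s]$ (by $\kappa V(\phi(x,T-s))$ and $\frac{1}{2(T+1)}\log(M_2V(\phi(x,T-s)))$ respectively), so all line integrals along the flow occurring below are finite.

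The core step is a first-jump (renewal) decomposition. Using the non-explosion of $\{\xi_t\}$ (Lemma~\ref{lemm 3.1}(a)), the description of the marked point process in Section~2, and the Markov property of $\{\xi_t\}$ under $(\pi^1,\pi^2)$, I condition on the first jump after the current time; writing $w_{s,x}(\tau):=\varphi(s+\tau,\phi(x,\tau))$, $\hat q_{s,x}(\tau):=\hat q(s+\tau,\phi(x,\tau))$, $\hat c_{s,x}(\tau):=\hat c(s+\tau,\phi(x,\tau))$ and $G_{s,x}(\tau):=\int_S\varphi(s+\tau,y)\,\hat q(dy|s+\tau,\phi(x,\tau))$, and using the flow identity, this yields, for every $(s,x)\in[0,T]\times S$ and $\tau\in[0,T-s]$,
\begin{align*}
w_{s,x}(\tau)=e^{\int_\tau^{T-s}(\lambda\hat c_{s,x}-\hat q_{s,x})(r)\,dr}\,e^{\lambda g(T,\phi(x,T-s))}+\int_\tau^{T-s}e^{\int_\tau^v(\lambda\hat c_{s,x}-\hat q_{s,x})(r)\,dr}\bigl[G_{s,x}(v)+\hat q_{s,x}(v)\,w_{s,x}(v)\bigr]\,dv .
\end{align*}
Multiplying through by $E_{s,x}(\tau):=\exp\bigl(\int_0^\tau(\lambda\hat c_{s,x}-\hat q_{s,x})(r)\,dr\bigr)$ turns this into the linear Volterra equation
\begin{align*}
E_{s,x}(\tau)\,w_{s,x}(\tau)=E_{s,x}(T-s)\,e^{\lambda g(T,\phi(x,T-s))}+\int_\tau^{T-s}E_{s,x}(v)\bigl[G_{s,x}(v)+\hat q_{s,x}(v)\,w_{s,x}(v)\bigr]\,dv .
\end{align*}
By Lemma~\ref{lemm 3.1}(d), $w_{s,x}$ is bounded (by $L_2V(\phi(x,T-s))$) and bounded below by a positive constant on $[0,T-s]$; by the preceding remarks $E_{s,x}$ is absolutely continuous, bounded, and bounded away from $0$ there; and $|G_{s,x}(v)|\le C\,V_1(\phi(x,T-s))$ uniformly in $v$, using $\varphi\le L_2V$, Assumption~\ref{assm 3.1}(i) applied after isolating the atom at the current state, Assumption~\ref{assm 3.2}(i) to control $\hat q$, and $V^2\le M_3V_1$. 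Hence the integrand of the Volterra equation is in $L^1([0,T-s])$, so the right-hand side, and therefore $E_{s,x}(\cdot)\,w_{s,x}(\cdot)$, is absolutely continuous in $\tau$; dividing by the absolutely continuous, bounded-away-from-$0$ function $E_{s,x}$ shows $w_{s,x}$ is absolutely continuous on $[0,T-s]$. Since $(s,x)$ is arbitrary, $\varphi\in\mathbb{B}^{ac}_V([0,T]\times S)$.

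Finally, differentiating the Volterra equation at Lebesgue-almost every $\tau$, using $E_{s,x}'(\tau)=(\lambda\hat c_{s,x}-\hat q_{s,x})(\tau)\,E_{s,x}(\tau)$ and cancelling $E_{s,x}(\tau)>0$, gives
\begin{align*}
w_{s,x}'(\tau)+\lambda\hat c_{s,x}(\tau)\,w_{s,x}(\tau)+\int_S\varphi(s+\tau,y)\,\hat q(dy|s+\tau,\phi(x,\tau))=0\qquad\text{for a.e. }\tau\in[0,T-s].
\end{align*}
On $\mathscr{D}^\varphi$ one has $w_{s,x}'(\tau)=L^\phi\varphi(s+\tau,\phi(x,\tau))$, so taking $\tau=0$ recovers (\ref{eq 3.13n}); the same identity also yields $|L^\phi\varphi(s,x)|\le C\,V_1(\phi(x,T-s))$, hence $\varphi\in\mathbb{B}^{ac}_{V,V_1}([0,T]\times S)$. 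I expect the first step to be the main obstacle: establishing the first-jump decomposition rigorously—well-definedness of the conditional expectation, joint measurability of $\varphi$ and of $(s,x,\tau)\mapsto G_{s,x}(\tau)$, and the passage from the path-functional criterion (\ref{eq 3.2}) to the recursion—and then propagating the $L^1$/absolute-continuity estimates using only Assumptions~\ref{assm 3.1} and \ref{assm 3.2}; once the Volterra equation is in place, the differentiation step is routine.
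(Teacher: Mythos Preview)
Your proposal is correct and follows essentially the same route as the paper: both arguments derive the first-jump (renewal) representation of $J^{\pi^1,\pi^2}$, multiply by the exponential factor $\exp\bigl(\int_0^\tau(\lambda\hat c_{s,x}-\hat q_{s,x})\bigr)$ to reduce to an integral identity whose right-hand side is manifestly absolutely continuous, and then differentiate along the flow to obtain (\ref{eq 3.13n}) together with the $V_1$-bound on $L^\phi\varphi$. The only cosmetic point is your phrase ``taking $\tau=0$'': since the differentiated identity holds only for a.e.\ $\tau$, what you actually obtain (and what the paper states) is (\ref{eq 3.13n}) for $(t,x)\in\mathscr{D}^{\varphi}$, i.e.\ for $(s+\tau,\phi(x,\tau))$ with $\tau\in Z^c_\varphi(s,x)$, rather than at the specific value $\tau=0$.
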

	\begin{proof}
		In view of Lemma \ref{lemm 3.1}, we have $\|J^{\pi^1,\pi^{2}}\|_{V}\leq M_2e^{\rho_1 T} [1+\frac{b_1}{\rho_1}]<\infty$. Now, fix arbitrarily $(s,x)\in [0,T]\times S$. Then by conditioning on the
		first-jump time and the post-jump state, using the jump property of the process $\{\xi_t\}$ yields that
		\begin{align}
	J^{\pi^1,\pi^{2}}(s,x)&=e^{-\int_{s}^{T}q(v,\phi(x,v-s),\pi^{1}(\cdot|\phi(x,v-s),v),\pi^{2}(\cdot|\phi(x,v-s,v)))dv}\nonumber\\
	&\times e^{\int_{s}^{T}r(v,\phi(x,v-s),\pi^{1}(\cdot|\phi(x,v-s),v),\pi^{2}(\cdot|\phi(x,v-s),v))dv}e^{g(T,\phi(x,T-s))}\nonumber\\
	&+\int_{s}^{T}\int_{S\backslash \{\phi(x,z-s)\}}	J^{\pi^1,\pi^{2}}(z,y)q(dy|z,\phi(x,z-s),\pi^{1}(\cdot|\phi(x,z-s),z),\pi^{2}(\cdot|\phi(x,z-s),z))\nonumber\\
	&\cdot e^{-\int_{s}^{z}q(v,\phi(x,v-s),\pi^{1}(\cdot|\phi(x,v-s),v),\pi^{2}(\cdot|\phi(x,v-s),v))dv}\nonumber\\	&\times e^{\int_{s}^{z}r(v,\phi(x,v-s),\pi^{1}(\cdot|\phi(x,v-s),v),\pi^{2}(\cdot|\phi(x,v-s),v))dv}dz,\label{eq 3.21n}
		\end{align}
		
		for every $(s,x)\in [0,T]\times S$. It is obvious that $	J^{\pi^1,\pi^{2}}(T,x)=e^{g(T,x)}$. For every $s\in [0,T)$, replacing respectively $s$ with $s+t$, and $x$ with $\phi(x,t)~(t\in [0,T-s])$ in (\ref{eq 3.21n}), and then multiplying by
		\begin{align*}
	&e^{-\int_{s}^{s+t}q(v,\phi(x,v-s),\pi^{1}(\cdot|\phi(x,v-s),v),\pi^{2}(\cdot|\phi(x,v-s),v))dv}\nonumber\\
	&\times e^{\int_{s}^{s+t}r(v,\phi(x,v-s),\pi^{1}(\cdot|\phi(x,v-s),v),\pi^{2}(\cdot|\phi(x,v-s),v))dv}
		\end{align*} 
		both sides of (\ref{eq 3.21n}), we obtain
			\begin{align*}
		&e^{-\int_{s}^{s+t}q(v,\phi(x,v-s),\pi^{1}(\cdot|\phi(x,v-s),v),\pi^{2}(\cdot|\phi(x,v-s),v))dv}\nonumber\\
		&\times e^{\int_{s}^{s+t}r(v,\phi(x,v-s),\pi^{1}(\cdot|\phi(x,v-s),v),\pi^{2}(\cdot|\phi(x,v-s),v))dv}\times J^{\pi^1,\pi^{2}}(s+t,\phi(x,t))\nonumber\\
			&=e^{-\int_{s}^{T}q(v,\phi(x,v-s),\pi^{1}(\cdot|\phi(x,v-s),v),\pi^{2}(\cdot|\phi(x,v-s),v))dv}\nonumber\\
			&\times e^{\int_{s}^{T}r(v,\phi(x,v-s),\pi^{1}(\cdot|\phi(x,v-s),v),\pi^{2}(\cdot|\phi(x,v-s),v))dv}e^{g(T,\phi(x,T-s))}\nonumber\\
			&+\int_{s+t}^{T}\int_{S\backslash \{\phi(x,z-s)\}}J^{\pi^1,\pi^{2}}(z,y)q(dy|z,\phi(x,z-s),\pi^{1}(\cdot|\phi(x,z-s),z),\pi^{2}(\cdot|\phi(x,z-s),z))\nonumber\\
			&\cdot e^{-\int_{s}^{z}q(v,\phi(x,v-s),\pi^{1}(\cdot|\phi(x,v-s),v),\pi^{2}(\cdot|\phi(x,v-s),v))dv}e^{\int_{s}^{z}r(v,\phi(x,v-s),\pi^{1}(\cdot|v,\phi(x,v-s)),\pi^{2}(\cdot|v,\phi(x,v-s)))dv}dz,
		\end{align*} 
		which shows that $J^{\pi^1,\pi^{2}}(s+t,\phi(x,t))$ is absolutely continuous in $t\in [0,T-s]$, and thus, is differentiable almost everywhere on $[0,T-s]$. Hence, differentiating both sides of the resulting equality with respect to $t$, and then dividing by 
		\begin{align*}
		&e^{-\int_{s}^{s+t}q(v,\phi(x,v-s),\pi^{1}(\cdot|\phi(x,v-s),v),\pi^{2}(\cdot|\phi(x,v-s),v))dv}\nonumber\\
		&\times e^{\int_{s}^{s+t}r(v,\phi(x,v-s),\pi^{1}(\cdot|\phi(x,v-s),v),\pi^{2}(\cdot|\phi(x,v-s),v))dv}
		\end{align*}
		both sides of the resulting equality yield
		\begin{align*}
		&L^{\phi}J^{\pi^1,\pi^{2}}(s+t,\phi(x,t))+r(s+t,\phi (x,t),\pi^{1}(\cdot|\phi(x,t),s+t),\pi^{2}(\cdot|\phi(x,t),s+t))J^{\pi^{1},\pi^{2}}(s+t,\phi(x,t))\\
		&+\int_SJ^{\pi^{1},\pi^{2}}(s+t,y)q(dy|s+t,\phi(x,t),\pi^{1}(\cdot|\phi(x,t),s+t),\pi^{2}(\cdot|\phi(x,t),s+t))=0~\forall t\in Z^c_{\varphi}(s,x).
		\end{align*}
		This implies that
			\begin{align*}
		L^{\phi}J^{\pi^1,\pi^{2}}(s,x)&+r(s,x,\pi^{1}(\cdot|x,s),\pi^{2}(\cdot|x,s))J^{\pi^{1},\pi^{2}}(s,x)\\
		&+\int_SJ^{\pi^{1},\pi^{2}}(s,y)q(dy|s,x,\pi^{1}(\cdot|x,s),\pi^{2}(\cdot|x,s))=0~\forall (s,x)\in \mathscr{D}^{J^{\pi^1,\pi^{2}}}.
		\end{align*}
		Now,
		by (\ref{eq 3.13n}) and Assumption \ref{assm 3.1} and \ref{assm 3.2}, we obtain
		\begin{align*}
|	L^{\phi}J^{\pi^1,\pi^{2}}(s,x)|&\leq [M_2M_3\|J^{\pi^1,\pi^{2}}\|_{V}+\|J^{\pi^1,\pi^{2}}\|_{V}(\rho_1M_3+b_1+2\kappa M_3)]\\
&\times V_1(\phi(x,T-s)),~(s,x)\in \mathscr{D}^{J^{\pi^1,\pi^{2}}}.
		\end{align*}
	Hence, $\|L^{\phi}{J^{\pi^1,\pi^{2}}}\|^{es}_{V_1}<\infty$. Consequently, $J^{\pi^1,\pi^{2}}\in \mathbb{B}^{ac}_{V,V_1}([0,T]\times S)$.
	Now, if $\varphi(s,x)\in \mathbb{B}^{ac}_{V,V_1}([0,T]\times S)$ is also a solution to eq. (\ref{eq 3.13n}), by Corrolary \ref{corr 3.1}, we must have $J^{\pi^1,\pi^{2}}(s,x)=\varphi(s,x)$, for all $(s,x)\in [0,T]\times S$. In other word, the solution of equation (\ref{eq 3.13n}) is unique.
\end{proof}
\section{The existence of saddle-point equilibria}
In this section, we prove that the equation (\ref{eq 3.21}) is the optimality equation  for the cost criterion (\ref{eq 3.1}) and the equation (\ref{eq 3.21}) has a solution in $ \mathbb{B}^{ac}_{V,V_1}([0,T]\times S$.  Also we prove the existence of saddle-point equilibria by using the optimality equation (\ref{eq 3.21}). We impose the following continuty and compactness Assumptions, which guarantee the existence of a pair of optimal strategies.
 \begin{assumption}\label{assm 4.1}
 	\begin{enumerate}
 		\item [(i)] For each $(t,x)\in [0,T]\times S$, $A(t,x)$ and $B(t,x)$ are compact.
 		\item [(ii)] For each $t\in [0,T]$, $x\in S$, the function  $q(\cdot |t, x, a,b)$ is  continuous in $(a,b)\in A(t,x)\times B(t,x)$.
 			\item [(iii)] For each $(t,x)\in [0,T]\times S,$ the function $c(t,x,a,b)$ is continuous in $(a,b)\in A(t,x)\times B(t,x)$, and the function $\displaystyle \int_{S}V(\phi(y,T-t))q(dy|t,x,a,b)$ is continuous in $(a,b)\in A(t,x)\times B(t,x)$, with $V$ as in Assumption \ref{assm 3.1}.
 		\end{enumerate}
 \end{assumption}
 By closely mimicking the arguments in \cite{GH4}, or \cite[Lemma 8.3.7]{HL2}, one can easily get the following result, which is essential to prove the  existence of the optimal saddle point equilibrium; we omit the details. 
 \begin{lemma}\label{lemm 4.1}
 	Under Assumptions \ref{assm 3.1}, \ref{assm 3.2} and \ref{assm 4.1}, the functions 
 	$$c(t,x,\mu,\nu) \; \; \mbox{and} \; \; \int_{S}q(dy|t,x,\mu,\nu)u(t,y)$$ are continuous  at $(\mu,\nu)$ on $P(A(t,x))\times P(B(t,x))$ for each fixed $u \in \mathbb{B}^{ac}_{V,V_1}([0,T]\times S)$ and $x\in S$.
 \end{lemma}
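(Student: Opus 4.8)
The plan is to prove both continuity statements in two stages: first obtain continuity of the integrands as functions of the \emph{pure} actions $(a,b)$ on the compact sets $A(t,x)\times B(t,x)$, and then upgrade this to continuity in the \emph{randomized} actions $(\mu,\nu)$ by writing the quantities of interest as integrals of bounded continuous functions against product probability measures. Throughout, fix $t\in[0,T]$, $x\in S$ and $u\in\mathbb{B}^{ac}_{V,V_1}([0,T]\times S)$, put $w(y):=V(\phi(y,T-t))$ so that $|u(t,y)|\le\|u\|^\infty_V\,w(y)$ for all $y\in S$, and recall that $q(t,x,a,b)\le q^*(x)<\infty$ by (\ref{eq 2.2}) and $\int_S w(y)\,q(dy|t,x,a,b)\le \rho_1 w(x)+b_1$ by Assumption \ref{assm 3.1}(i).

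First I would show that $(a,b)\mapsto c(t,x,a,b)$ and $(a,b)\mapsto\int_S u(t,y)\,q(dy|t,x,a,b)$ are continuous on $A(t,x)\times B(t,x)$. For $c$ this is part of Assumption \ref{assm 4.1}(iii). For the kernel term I would use the decomposition $q(dy|t,x,a,b)=\widetilde q(dy|t,x,a,b)-q(t,x,a,b)\,\delta_x(dy)$, where $\widetilde q(\cdot|t,x,a,b)$ is the nonnegative finite measure carried by $S\setminus\{x\}$ — of total mass $q(t,x,a,b)$ by conservativeness — so that
\[
\int_S u(t,y)\,q(dy|t,x,a,b)=\int_{S\setminus\{x\}}u(t,y)\,\widetilde q(dy|t,x,a,b)-q(t,x,a,b)\,u(t,x).
\]
The last term is continuous in $(a,b)$ by Assumption \ref{assm 4.1}(ii). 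For the first term, given $(a_n,b_n)\to(a,b)$, I would split $u(t,\cdot)=u(t,\cdot)I_{\{w\le N\}}+u(t,\cdot)I_{\{w>N\}}$, pass to the limit on the first (bounded) summand via Assumption \ref{assm 4.1}(ii), and bound the contribution of the second summand by $\|u\|^\infty_V\int_{\{w>N\}}w\,\widetilde q(dy|t,x,a_n,b_n)$, which tends to $0$ uniformly in $n$ as $N\to\infty$ because $\int_S w\,\widetilde q(dy|t,x,a_n,b_n)\to\int_S w\,\widetilde q(dy|t,x,a,b)<\infty$ — a consequence of Assumption \ref{assm 4.1}(iii) and the continuity of $(a,b)\mapsto q(t,x,a,b)\,w(x)$. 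This is exactly the weighted-norm/generalized dominated convergence argument of \cite[Lemma 8.3.7]{HL2} and \cite{GH4}, so I would only sketch it. Consequently $h(a,b):=\int_S u(t,y)\,q(dy|t,x,a,b)$ and $c(t,x,\cdot,\cdot)$ are continuous, hence bounded, on the compact set $A(t,x)\times B(t,x)$.

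Next I would transfer these to $(\mu,\nu)$. As $A(t,x)$ and $B(t,x)$ are compact metric by Assumption \ref{assm 4.1}(i), the Prohorov topology on $P(A(t,x))$ and $P(B(t,x))$ is that of weak convergence, and $(\mu_n,\nu_n)\to(\mu,\nu)$ implies $\mu_n\otimes\nu_n\to\mu\otimes\nu$ weakly on $A(t,x)\times B(t,x)$. Since $(a,b)\mapsto\int_S w(y)\,\widetilde q(dy|t,x,a,b)$ is bounded on $A(t,x)\times B(t,x)$, the relevant iterated integrals converge absolutely, and Fubini's theorem together with the definitions of $c(t,x,\mu,\nu)$ and $q(dy|t,x,\mu,\nu)$ gives
\[
c(t,x,\mu,\nu)=\int_{B(t,x)}\int_{A(t,x)}c(t,x,a,b)\,\mu(da)\,\nu(db),\qquad \int_S u(t,y)\,q(dy|t,x,\mu,\nu)=\int_{B(t,x)}\int_{A(t,x)}h(a,b)\,\mu(da)\,\nu(db).
\]
Because $c(t,x,\cdot,\cdot)$ and $h$ are bounded and continuous on $A(t,x)\times B(t,x)$, the right-hand sides are continuous in $(\mu,\nu)$ for weak convergence, which is the assertion of the lemma.

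I expect the only genuinely non-routine step to be the tail estimate in the second paragraph: because $u$ is merely $V$-bounded and need not be bounded, the plain continuity of $q(\cdot|t,x,\cdot,\cdot)$ in $(a,b)$ supplied by Assumption \ref{assm 4.1}(ii) does not by itself pass through the integral against $u(t,\cdot)$, and one must bring in the continuity of the moment $\int_S V(\phi(y,T-t))\,q(dy|t,x,a,b)$ in $(a,b)$ from Assumption \ref{assm 4.1}(iii) to justify the interchange of limit and integral. Everything else is routine functional analysis and can safely be left to \cite[Lemma 8.3.7]{HL2} and \cite{GH4}, as the authors indicate.
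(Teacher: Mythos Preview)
Your proposal is correct and follows precisely the approach the paper indicates: the paper omits the proof entirely, simply stating that the result follows by mimicking the arguments in \cite{GH4} and \cite[Lemma~8.3.7]{HL2}, which is exactly the weighted-norm continuity argument you carry out in detail. Your explicit decomposition via $\widetilde q$ and the tail-truncation step using Assumption~\ref{assm 4.1}(iii) make the omitted details rigorous, and the passage from pure to randomized actions via weak convergence of product measures on the compact action sets is the standard finishing step.
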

 In the next Proposition, we prove that the optimality equation \ref{eq 3.1}, has a solution if the transition and cost rates are bounded. 
\begin{proposition}\label{prop 4.1}
	Suppose that the transition and cost rates are bounded, i,e.,
	\begin{align*}
	\sup_{x\in S}q^{*}(x)<\infty,\ \ \sup_{(t,x,a,b)\in K}|c(t,x,a,b)|< \infty,\ \ \sup_{(t,x)\in [0,T]\times S}|g(t,x)|< \infty.
	\end{align*}
	Also for each $x\in S$, $t\in [0,T]$, $A(t,x)$, $B(t,x)$ are compact, $c(t,x,a,b)$ is continuous in $(a,b)\in A(t,x)\times B(t,x)$ and for each bounded measurable function $u$ on $S$, the function $\int_{S}u(\phi(y,T-t))q(dy|t,x,a,b)$ is continuous in $(a,b)\in A(t,x)\times B(t,x)$, 
	then there exists a unique $\varphi\in \mathbb{B}^{ac}_{1,1}([0,T]\times S),$ and some $(\pi^{*1},\pi^{*2})\in \Pi_{m}^1\times \Pi_{m}^2$  satisfying 	the following equations (i.e., (\ref{eq 3.21})-(\ref{eq 3.22a}))  \begin{align}
	&-L^{\phi}\varphi(t,x)\nonumber\\
	&=\sup_{\mu\in P(A(t,x))}\inf_{\nu\in P(B(t,x))}\biggl\{\lambda c(t,x,\mu,\nu)\varphi(t,x)+\int_{S}\varphi(t,y)q(dy|t,x,\mu,\nu)\biggr\}\nonumber\\
	&=\inf_{\nu\in P(B(t,x))}\sup_{\mu\in P(A(t,x))}\biggl\{\lambda c(t,x,\mu,\nu)\varphi(t,x)+\int_{S}\varphi(t,y)q(dy|t,x,\mu,\nu)\biggr\}\nonumber\\
	&=\inf_{\nu\in P(B(t,x))}\biggl\{\lambda c(t,x,\pi^{*1}(\cdot|x,t),\nu)\varphi(t,x)+\int_{S}\varphi(t,y)q(dy|t,x,\pi^{*1}(\cdot|x,t),\nu)\biggr\}\nonumber\\
	&=\sup_{\mu\in P(A(t,x))}\biggl\{\lambda c(t,x,\mu,\pi^{*2}(\cdot|x,t))\varphi(t,x)+\int_{S}\varphi(t,y)q(dy|t,x,\mu,\pi^{*2}(\cdot|x,t))\biggr\}\nonumber\\
		&=\lambda c(t,x,\pi^{*1}(\cdot|x,t),\pi^{*2}(\cdot|x,t))\varphi(t,x)+\int_{S}\varphi(t,y)q(dy|t,x,\pi^{*1}(\cdot|x,t),\pi^{*2}(\cdot|x,t)),~\forall (t,x)\in \mathscr{D}^{\varphi}\nonumber\\
	&\text{ where}~\varphi(T,x)=e^{g(T,x)},~~~x\in S.\label{eq 3.21}
	\end{align}
	
	 	Also,
	 		\begin{enumerate}
		\item [(a)]

		\interdisplaylinepenalty=0
		\begin{align}
		\varphi(0,x) &=\sup_{\pi^1\in \Pi^1}\inf_{\pi^2\in \Pi^2} J^{\pi^1,\pi^2}(0,x)
		=\inf_{\pi^2\in \Pi^2}\sup_{\pi^1\in \Pi^1}J^{\pi^1,\pi^2}(0,x)\nonumber\\
		&= \inf_{\pi^2\in \Pi^2}J^{\pi^{*1},\pi^2}(0,x)
		=\sup_{\pi^1\in \Pi^1}J^{\pi^1,\pi^{*2}}(0,x).\label{eq 3.22}
		\end{align}
		\item  [(b)] 
		\interdisplaylinepenalty=0
		\begin{align}
		\varphi(t,x)& =\sup_{\pi^1\in \Pi_m^1}\inf_{\pi^2\in \Pi_m^2} J^{\pi^1,\pi^2}(t,x)
		=\inf_{\pi^2\in \Pi_m^2}\sup_{\pi^1\in \Pi_m^1}J^{\pi^1,\pi^2}(t,x)\nonumber\\
		&= \inf_{\pi^2\in \Pi_m^2}J^{\pi^{*1},\pi^2}(t,x)
		=\sup_{\pi^1\in \Pi_m^1}J^{\pi^1,\pi^{*2}}(t,x). \label{eq 3.22a}
		\end{align}
	\end{enumerate}
	\begin{proof}
	Since $q^{*}(x)=\sup_{t\geq 0,a\in A(t,x),b\in B(t,x)}q(t,x,a,b)$ is bounded, we may use the Lyapunov function $V_0(\cdot)\equiv 1$ such that	$\int_{S}q(dy|t,x,a,b)V_0(\phi(y,T-t))\leq \rho_1 V_0(x)+b_1$  for $x\in S$. Also, we have $\|q\|:=\sup_{x\in S}q^{*}(x)<\infty$, $\|c\|:=\sup_{(t,x,a,b)\in K}c(t,x,a,b)<\infty$ and $\|g\|:=\sup_{(t,x)\in [0,T]\times S}g(t,x)<\infty$.
		Now let us define an nonlinear operator $T$ on $B_{1}([0,1]\times S)$ as follows:
		\begin{align*}
		\Gamma u(s,x)=e^{\lambda g(T,\phi(x,T-s))}&+\int_{s}^{T}\sup_{\mu\in P(A(z,\phi(x,z-s)))}\inf_{\nu\in P(B(z,\phi(x,z-s)))}\biggl[\int_{S}q(dy|z,\phi(x,z-s),\mu,\nu)u(z,y)\\
		&+c(z,\phi(x,z-s),\mu,\nu)u(z,\phi(x,z-s))\biggr]dz,
		\end{align*}
		where $u\in B_{1}([0,T]\times S)$ and $(s,x)\in [0,T]\times S$. Then by Fan's minimax theorem, see \cite[Theorem 3]{Fan} we have
		\interdisplaylinepenalty=0
		\begin{align*}
	\Gamma u(s,x)=e^{\lambda g(T,\phi(x,T-s))}&+\int_{s}^{T}\inf_{\nu\in P(B(z,\phi(x,z-s)))}\sup_{\mu\in P(A(z,\phi(x,z-s)))}\biggl[\int_{S}q(dy|z,\phi(x,z-s),\mu,\nu)u(z,y)\\
	&+c(z,\phi(x,z-s),\mu,\nu)u(z,\phi(x,z-s))\biggr]dz,
	\end{align*}
		By using the Assumption \ref{assm 3.1} and the facts that $c$ and $q$ are bounded, we obtain
		\interdisplaylinepenalty=0
		\begin{align*}
		&\sup_{s\in [0,T]}\sup_{x\in S}|	\Gamma u(s,x)|\\
		&\leq e^{\|g\|}+\int_{s}^{T}\inf_{\nu\in P(B(z,\phi(x,z-s)))}\sup_{\mu\in P(A(z,\phi(x,z-s)))}\biggl\{\sup_{x\in S}\biggl[\int_{S}|q(dy|z,\phi(x,z-s),\mu,\nu)|u(z,y)\biggr]\\
		&+\|c\|\sup_{x\in S}u(z,\phi(x,z-s))\biggr\}dz\\
		&\leq e^{\|g\|}+{\|u\|_1^\infty}\biggl\{\int_{s}^{T}\sup_{\mu\in P(B(z,\phi(x,z-s)))}\sup_{\nu\in P(B(z,\phi(x,z-s)))}\sup_{x\in S}\biggl(2q(s,x,\mu,\nu)\biggr)ds+\|c\|(T-s)\biggr\}\\
		&\leq e^{\|g\|}+{\|u\|_1^\infty}(2\|q\|+\|c\|)(T-s).
		\end{align*}
		Therefore, $	\Gamma $ is a nonlinear operator from $B_{1}([0,T]\times S)$ to $B_{1}([0,T]\times S)$. Next, we prove that $	\Gamma $ is a $m$-step contraction operator. For any $g_1,g_2\in B_{1}([0,T]\times S)$, we have
		\interdisplaylinepenalty=0
		\begin{align}
		\sup_{x\in S}|	\Gamma g_1(t,x)-	\Gamma g_2(t,x)|
		&\leq \int_{t}^{T}\biggl(2\|q\|+\|c\|\biggr)\|g_1-g_2\|ds\nonumber\\
		&= \biggl(2\|q\|+\|c\|\biggr)\|g_1-g_2\|^\infty_1(T-t).\label{eq 4.1}
		\end{align}
		Furthermore, we can prove the following estimation by induction:
		\begin{equation}
		\sup_{x\in S}|	\Gamma ^lg_1(t,x)-	\Gamma ^lg_2(t,x)|\leq \frac{\|g_1-g_2\|^\infty_1}{ l!}\biggl[\biggl(2\|q\|+\|c\|\biggr)(T-t)\biggr]^l~~\forall~l\geq1.\label{eq 4.2}
		\end{equation}
		By (\ref{eq 4.2}) and the inductive assumption we have
		\interdisplaylinepenalty=0
		\begin{align*}
		&\sup_{x\in S}|	\Gamma ^{l+1}g_1(t,x)-	\Gamma ^{l+1}g_2(t,x)|\\
		&\leq \biggl(2\|q\|+\|c\|\biggr)\int_{t}^{T}\sup_{x\in S}|	\Gamma ^lg_1(s,x)-	\Gamma ^lg_2(s,x)|ds\\
		&	\leq  \frac{\|g_1-g_2\|^\infty_1}{ l!}\biggl[\biggl(2\|q\|+\|c\|\biggr)\biggr]^{l+1}\int_{t}^{T}(T-s)^l ds\\
		&=\frac{\biggl((2\|q\|+\|c\|)(T-t)\biggr)^{l+1}}{l+1!}\|g_1-g_2\|^\infty_1.
		\end{align*}
		Since $\sum_{k\geq 1}\frac{\biggl((2\|q\|+\|c\|)(T-t)\biggr)^{k}}{k!}<\infty$, there exists $m$ such that 
		$\beta:=\frac{\biggl((2\|q\|+\|c\|)(T-t)\biggr)^{m}}{m!}<1,$
		which implies that $\|	\Gamma ^m g_1-	\Gamma ^m g_2\|_1^\infty\leq 
		\beta \|g_1-g_2\|^\infty_1$. Therefore, $\Gamma $ is a $m$-step contraction operator on $B_{1}([0,T]\times S)$. So, by Banach fixed point theorem, there exists a unique bounded function $\varphi\in B_{1}([0,T]\times S)$ such that $	\Gamma \varphi(t,x)=\varphi(t,x)$; that is,
		\interdisplaylinepenalty=0
		\begin{align*}
		-L^\phi\varphi(t,x)=&\inf_{\nu\in P(B(t,x))}\sup_{\mu\in P(A(t,x))}\biggl[\int_{S}q(dy|t,x,\mu,\nu)	\varphi(t,y)+c(t,x,\mu,\nu)	\varphi(t,x)\biggr]\\
		=&\sup_{\mu\in P(A(t,x))}\inf_{\nu\in P(B(t,x))}\biggl[\int_{S}q(dy|t,x,\mu,\nu)	\varphi(t,y)+c(t,x,\mu,\nu)	\varphi(t,x)\biggr].
		\end{align*}
	Note that $\varphi(T,x)=e^{\lambda g(T,x)}$. By the above equation, Assumptions \ref{assm 3.1} and \ref{assm 3.2}, we say that $\varphi(t,x)\in \mathbb{B}_{1,1}([0,T]\times S)$. Now by measurable selection theorem in \cite{N}, we have there exists a pair of strategies $(\pi^{*1},\pi^{*2})\in \Pi_m^1\times\Pi_m^2$ satisfying equation (\ref{eq 3.21}).
	Next, from (\ref{eq 3.21}), we obtain
		 \begin{align}
&L^{\phi}\varphi(t,x)+\inf_{\nu\in P(B(t,x))}\biggl\{\lambda c(t,x,\pi^{*1}(\cdot|x,t),\nu)\varphi(t,x)+\int_{S}\varphi(t,y)q(dy|t,x,\pi^{*1}(\cdot|x,t),\nu)\biggr\}=0~\forall (t,x)\in \mathscr{D}^{\varphi}\nonumber\\
	&\text{ where}~\varphi(T,x)=e^{g(T,x)},~~~x\in S.\label{eq 3.14New}
	\end{align}
	Hence, 
	\begin{align}
	&L^{\phi}\varphi(t,x)+\biggl\{\lambda c(t,x,\pi^{*1}(\cdot|x,t),\nu)\varphi(t,x)+\int_{S}\varphi(t,y)q(dy|t,x,\pi^{*1}(\cdot|x,t),\nu)\biggr\}\geq 0~\forall (t,x)\in \mathscr{D}^{\varphi}\nonumber\\
	&\text{ where}~\varphi(T,x)=e^{g(T,x)},~~~x\in S.\label{eq 3.15New}
	\end{align}
	Then by Corollary \ref{corr 3.1}, we obtain
	$\varphi(0,x) \leq J^{\pi^{*1},\pi^2}(0,x), \forall \pi^2\in \Pi^2$.	
	Since $\pi^2\in \Pi^2$ is arbitrary strategy for player 2, we have
	$$\inf_{\pi^2\in \Pi^2} J^{\pi^{*1},\pi^2}(0,x)\geq \varphi(0,x).$$
	So,
	\begin{align}
	\sup_{\pi^1\in \Pi^1}\inf_{\pi^2\in \Pi^2} J^{\pi^1,\pi^2}(0,x)\geq \inf_{\pi^2\in \Pi^2}J^{\pi^{*1},\pi^2}(0,x)\geq \varphi(0,x).\label{eq 3.18}
	\end{align}
	Similarly, we have
		\begin{align}
\inf_{\pi^2\in \Pi^2} 	\sup_{\pi^1\in \Pi^1}J^{\pi^1,\pi^2}(0,x)\leq  \varphi(0,x).\label{eq 3.18a}
	\end{align}
	Also, from (\ref{eq 3.21}), we get 
		 \begin{align}
& c(t,x,\pi^{*1}(\cdot|x,t),\pi^{*2}(\cdot|x,t))\varphi(t,x)+\int_{S}\varphi(t,y)q(dy|t,x,\pi^{*1}(\cdot|x,t),\pi^{*2}(\cdot|x,t)),~\forall (t,x)\in \mathscr{D}^{\varphi}\nonumber\\
	&\text{ where}~\varphi(T,x)=e^{g(T,x)},~~~x\in S.\label{eq 3.19}
	\end{align}
	Using (\ref{eq 3.19}) and Theorem \ref{Theorem 3.2}, we conclude that
		\begin{align}
 J^{\pi^{*1},\pi^{*2}}(0,x)= \varphi(0,x).\label{eq 3.20}
	\end{align}
	Combining (\ref{eq 3.18}), (\ref{eq 3.18a}) and (\ref{eq 3.20}), we have
\interdisplaylinepenalty=0
\begin{align*}
\varphi(0,x)& =\sup_{\pi^1\in \Pi^1}\inf_{\pi^2\in \Pi^2} J^{\pi^1,\pi^2}(0,x)
=\inf_{\pi^2\in \Pi^2}\sup_{\pi^1\in \Pi^1}J^{\pi^1,\pi^2}(0,x)\nonumber\\
&= J^{\pi^{*1},\pi^{*2}}(0,x). 
\end{align*}
Similarly, we obtain
\interdisplaylinepenalty=0
\begin{align*}
\varphi(t,x)& =\sup_{\pi^1\in \Pi_m^1}\inf_{\pi^2\in \Pi_m^2} J^{\pi^1,\pi^2}(t,x)
=\inf_{\pi^2\in \Pi_m^2}\sup_{\pi^1\in \Pi_m^1}J^{\pi^1,\pi^2}(t,x)\nonumber\\
&= \inf_{\pi^2\in \Pi_m^2}J^{\pi^{*1},\pi^2}(t,x)
=\sup_{\pi^1\in \Pi_m^1}J^{\pi^1,\pi^{*2}}(t,x)\nonumber\\
&= J^{\pi^{*1},\pi^{*2}}(t,x).
\end{align*}
			\end{proof}
\end{proposition}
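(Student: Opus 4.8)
The plan is to realise the Shapley equation (\ref{eq 3.21}) as the fixed point of a contraction on a space of bounded functions, and then to read off the value of the game and the saddle point from a measurable selection together with the verification results already established. Since $q^{*}$ is bounded, the Lyapunov function may be taken to be $V_{0}\equiv 1$, so that $\mathbb{B}^{ac}_{V,V_{1}}$ reduces to $\mathbb{B}^{ac}_{1,1}([0,T]\times S)$ and it suffices to work in $B_{1}([0,T]\times S)$; write $\|q\|:=\sup_{x}q^{*}(x)$, $\|c\|:=\sup_{K}|c|$, $\|g\|:=\sup|g|$. First I would introduce the integral operator
\begin{align*}
\Gamma u(s,x):=e^{\lambda g(T,\phi(x,T-s))}&+\int_{s}^{T}\sup_{\mu\in P(A(z,\phi(x,z-s)))}\inf_{\nu\in P(B(z,\phi(x,z-s)))}\biggl[\int_{S}q(dy|z,\phi(x,z-s),\mu,\nu)u(z,y)\\
&\quad+c(z,\phi(x,z-s),\mu,\nu)u(z,\phi(x,z-s))\biggr]dz,
\end{align*}
which is exactly the representation one obtains by conditioning on the first jump. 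Using Assumption \ref{assm 4.1} (compact action sets), Lemma \ref{lemm 4.1} (joint continuity in $(\mu,\nu)$) and the fact that the bracketed expression is affine, hence concave--convex, in $(\mu,\nu)$, Fan's minimax theorem lets me interchange $\sup_{\mu}$ and $\inf_{\nu}$ inside $\Gamma$; the boundedness of $q,c,g$ gives $\|\Gamma u\|_{1}^{\infty}\le e^{\|g\|}+(2\|q\|+\|c\|)T\,\|u\|_{1}^{\infty}<\infty$, so $\Gamma$ maps $B_{1}([0,T]\times S)$ into itself.

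Next I would establish that $\Gamma$ is an $m$-step contraction. A one-step estimate gives $\sup_{x}|\Gamma g_{1}(t,x)-\Gamma g_{2}(t,x)|\le(2\|q\|+\|c\|)(T-t)\|g_{1}-g_{2}\|_{1}^{\infty}$, and induction on $l$ yields $\sup_{x}|\Gamma^{l}g_{1}(t,x)-\Gamma^{l}g_{2}(t,x)|\le\frac{\|g_{1}-g_{2}\|_{1}^{\infty}}{l!}\bigl[(2\|q\|+\|c\|)(T-t)\bigr]^{l}$; since $\sum_{l}\frac{[(2\|q\|+\|c\|)T]^{l}}{l!}<\infty$, some iterate $\Gamma^{m}$ is a genuine contraction, and the Banach fixed point theorem produces a unique $\varphi\in B_{1}([0,T]\times S)$ with $\Gamma\varphi=\varphi$. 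Replacing $(s,x)$ by $(s+t,\phi(x,t))$ in the fixed-point identity, multiplying through by the exponential weights built from $q$ and $\lambda c$, and differentiating in $t$ then shows that $t\mapsto\varphi(s+t,\phi(x,t))$ is absolutely continuous and that $\varphi$ solves (\ref{eq 3.21}) in the pointwise (differential) sense; combined with Assumptions \ref{assm 3.1}--\ref{assm 3.2} this gives $\|L^{\phi}\varphi\|^{es}_{V_{1}}<\infty$, so $\varphi\in\mathbb{B}^{ac}_{1,1}([0,T]\times S)$. Applying the measurable selection theorem of \cite{N} to the static minimax game with payoff $\lambda c(t,x,\mu,\nu)\varphi(t,x)+\int_{S}\varphi(t,y)q(dy|t,x,\mu,\nu)$ produces Markov strategies $\pi^{*1}\in\Pi_{m}^{1}$ and $\pi^{*2}\in\Pi_{m}^{2}$ realising all the extrema in (\ref{eq 3.21}).

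It then remains to identify $\varphi$ with the value. From (\ref{eq 3.21}), $\pi^{*1}$ satisfies $L^{\phi}\varphi(t,x)+\lambda c(t,x,\pi^{*1},\nu)\varphi(t,x)+\int_{S}\varphi(t,y)q(dy|t,x,\pi^{*1},\nu)\ge 0$ for every $\nu$, so Corollary \ref{corr 3.1} gives $\varphi(0,x)\le J^{\pi^{*1},\pi^{2}}(0,x)$ for all $\pi^{2}$, hence $\sup_{\pi^{1}}\inf_{\pi^{2}}J^{\pi^{1},\pi^{2}}(0,x)\ge\inf_{\pi^{2}}J^{\pi^{*1},\pi^{2}}(0,x)\ge\varphi(0,x)$; the analogous inequality for $\pi^{*2}$ gives $\inf_{\pi^{2}}\sup_{\pi^{1}}J^{\pi^{1},\pi^{2}}(0,x)\le\varphi(0,x)$. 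Since always $\sup_{\pi^{1}}\inf_{\pi^{2}}\le\inf_{\pi^{2}}\sup_{\pi^{1}}$, these inequalities collapse to equalities. Plugging $(\pi^{*1},\pi^{*2})$ into (\ref{eq 3.21}) makes $\varphi$ a solution of the fixed-strategy equation (\ref{eq 3.13n}), and the uniqueness asserted in Theorem \ref{Theorem 3.2} forces $J^{\pi^{*1},\pi^{*2}}(0,x)=\varphi(0,x)$, giving (\ref{eq 3.22}); repeating the argument over Markov strategies gives (\ref{eq 3.22a}). I expect the main obstacle to be the passage from the integral fixed-point equation to the differential Shapley equation together with the verification that $\varphi\in\mathbb{B}^{ac}_{1,1}$ --- i.e.\ establishing absolute continuity along the flow and the esssup bound on $L^{\phi}\varphi$ --- since this is precisely what licenses the use of Corollary \ref{corr 3.1} and Theorem \ref{Theorem 3.2}; by contrast, the minimax interchange and the contraction estimates are comparatively routine.
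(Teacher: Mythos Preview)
Your proposal is correct and follows essentially the same route as the paper: the same integral operator $\Gamma$ on $B_{1}([0,T]\times S)$, Fan's minimax theorem for the $\sup$--$\inf$ interchange, the same $m$-step contraction estimate via induction, Banach's fixed-point theorem, measurable selection for $(\pi^{*1},\pi^{*2})$, and then Corollary~\ref{corr 3.1} and Theorem~\ref{Theorem 3.2} to identify $\varphi$ with the value and obtain the saddle point. Your account of the passage from the integral fixed-point equation to the differential Shapley equation (replacing $(s,x)$ by $(s+t,\phi(x,t))$ and differentiating) is in fact more explicit than the paper's, which simply asserts the differential form directly from $\Gamma\varphi=\varphi$.
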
 

\begin{proposition}\label{prop 4.2}
	Suppose Assumptions \ref{assm 3.1}, \ref{assm 3.2} and \ref{assm 4.1} hold. Then if $\|q\|<\infty$, $\|c\|<\infty$, $c(t,x,a,b)\geq 0$ and $g(t,x)\geq 0$, for all $(t,x,a,b)\in K$, the followings are true.
	\begin{enumerate}
		\item [(a)] There exists a unique solution $\varphi(t,x)$ and a pair of strategies $(\pi^{*1},\pi^{*2})\in \Pi_m^1\times\Pi_m^2$ satisfying (\ref{eq 3.21}), (\ref{eq 3.22}) and (\ref{eq 3.22a}).
		\item [(b)] $J_{*}(t,x)$ (and so $\varphi(t,x)$) is decreasing in $t\in [0,T]$ for fixed $x\in S$.
		\begin{proof}
			\begin{enumerate}
				\item [(a)] The proof of this part follows from Proposition \ref{prop 4.1}.
				\item [(b)]
		Fix any $s,t\in [0,T]$ with $s<t$. Also fix any $(\pi^1,\pi^2)\in \Pi_m^1\times\Pi_m^2$.
			Now for each $x\in S$, define a Markov strategy corresponding to $\pi^1\in \Pi_m^1$ as
			\begin{align}
	\pi^1_{s,t}	(db|x,v)=\left\{ \begin{array}{ll}&\pi^1(db|x,v+t-s)~\text{if}~v\geq s\\
	&\pi^1(db|x,v)~\text{otherwise}.
		\end{array}\right.\label{eq 4.3}
		\end{align}	
		Similarly, corresponding to the strategy $\pi^2\in \Pi_m^2$, we define $\pi^2_{s,t}	$. 
		Then, for each $(v,x)\in [s,s+T-t]\times S$,
		$q(dy|t,x,\pi^1_{s,t}	(db|x,v),	\pi^2_{s,t}	(db|x,v))=q(dy|t,x,\pi^1(db|x,v+t-s),\pi^2(db|x,v+t-s)),~c(t,x,\pi^1_{s,t}(db|x,v),	\pi^2_{s,t}	(db|x,v))=c(t,x,\pi^1(db|x,v+t-s),\pi^2(db|x,v+t-s)).$\
		Now define 
		\begin{align}
	&	J^{\pi^1,\pi^2}(s\sim t,x):= {E}^{\pi^1,\pi^2}_{(s,x)}\biggl[e^{\lambda\int_{s}^{t}c(t,\xi_t,\pi^1(da|\xi_v,v),\pi^2(db|\xi_v,v))dv+\lambda g(T,\xi_T)}\biggr].\label{eq 4.4}
		\end{align}
			\begin{align}
	J_{*}(s\sim t,x):=\sup_{\pi^1\in\Pi_m^1}\inf_{\pi^2\in \Pi_m^2}J^{\pi^1,\pi^2}(s\sim t,x).\label{eq 4.4a}
		\end{align}
		Now by the Markov property of $\{\xi_t,t\geq 0\}$ under any pair of Markov strategies $(\pi^1,\pi^2)$ and (\ref{eq 4.3})-(\ref{eq 4.4}), we have $\xi_u$ under strategies $\pi^1,\pi^2$ and $\xi_t=x$ has the same distribution with $\xi_{u+s-t}$ under the strategies $\pi^1_{s,t}$, $\pi^2_{s,t}$ and $\xi_s=x$ for any $t\leq u \leq T$.
		Consequently, we have
		$J^{\pi^1,\pi^2}(t\sim T,x)=J^{\pi^1_{s,t},\pi^2_{s,t}}(s\sim T+s-t,x)$.
		Next, it is easy to note that
		\begin{align*}
		\inf_{\pi^2\in \Pi_m^2}J^{\pi^1,\pi^2}(t\sim T,x)&=\inf_{\pi^2\in \Pi_m^2}J^{\pi^1_{s,t},\pi^2_{s,t}}(s\sim T+s-t,x)\\
		&\geq \inf_{\pi^2_{s,t}\in \Pi_m^2}J^{\pi^1_{s,t},\pi^2_{s,t}}(s\sim T+s-t,x)~\forall\pi^1\in \Pi^1_m.
		\end{align*}
		Similarly,
			\begin{align*}
	\inf_{\pi^2_{s,t}\in \Pi_m^2}J^{\pi^1_{s,t},\pi^2_{s,t}}(s\sim T+s-t,x)&=\inf_{\pi^2_{s,t}\in \Pi_m^2}J^{\pi^1,\pi^2}(t\sim T,x)\\
		&\geq \inf_{\pi^2\in \Pi_m^2}J^{\pi^1,\pi^2}(t\sim T,x)~\forall\pi^1\in \Pi^1_m.
		\end{align*}
		
	Similarly, we can show $J_{*}(t\sim T,x)\geq J_{*}(s\sim T+s-t,x)$ and $J_{*}(t\sim T,x)\leq J_{*}(s\sim T+s-t,x)$. So $J_{*}(t\sim T,x)= J_{*}(s\sim T+s-t,x)$.
		Now since $c(t,x,a,b)\geq 0$ on $K$, by (\ref{eq 4.4a}) and $t>s$, we have $J_{*}(t\sim T,x)=J_{*}(s\sim T+s-t,x)\leq J_{*}(s\sim T,x)$. By this and $J_{*}(t\sim T,x)=J_{*}(t,x)$, we have $J_{*}(t,x)\leq J_{*}(s,x)$. Now using this and by Corollary \ref{corr 3.1}, we say $\varphi(t,x)$ is also decreasing in $t$. This completes the proof.
			\end{enumerate}
			\end{proof}
	\end{enumerate}
\end{proposition}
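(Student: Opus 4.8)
\emph{Part (a).} I would simply read part (a) off from Proposition~\ref{prop 4.1}. The hypotheses $\|q\|<\infty$ and $\|c\|<\infty$ (with the terminal cost bounded, as is implicit in the bounded-rate regime) are exactly its boundedness requirements, while Assumption~\ref{assm 4.1}(i),(iii) supplies the compactness of $A(t,x)$, $B(t,x)$ and the continuity of $c(t,x,\cdot,\cdot)$ and of $(a,b)\mapsto\int_{S}u(\phi(y,T-t))\,q(dy|t,x,a,b)$. Proposition~\ref{prop 4.1} then gives the unique $\varphi\in\mathbb{B}^{ac}_{1,1}([0,T]\times S)$ and the pair $(\pi^{*1},\pi^{*2})\in\Pi^1_m\times\Pi^2_m$ satisfying (\ref{eq 3.21}), (\ref{eq 3.22}) and (\ref{eq 3.22a}); in particular $\varphi=J_{*}$. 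The sign conditions $c\ge0$, $g\ge0$ are not used here --- they enter only in part (b).

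\emph{Part (b).} For the monotonicity I would use a time-shift coupling of the dynamics together with the elementary fact that a nonnegative integrand can only increase the integral when its range is widened. Fix $0\le s<t\le T$, put $r:=t-s$, and for $(\pi^1,\pi^2)\in\Pi^1_m\times\Pi^2_m$ introduce the $r$-shifted Markov strategies $\pi^i_{s,t}(\cdot\,|\,x,v):=\pi^i(\cdot\,|\,x,v+r)$ for $v\ge s$ (and unchanged for $v<s$), the sub-horizon cost $J^{\pi^1,\pi^2}(s\!\sim\!t,x)$ --- defined like $J^{\pi^1,\pi^2}(t,x)$ but with the running cost accumulated only over $[s,t]$ and the terminal term $g(T,\xi_T)$ retained --- and its game value $J_{*}(s\!\sim\!t,x):=\sup_{\pi^1\in\Pi^1_m}\inf_{\pi^2\in\Pi^2_m}J^{\pi^1,\pi^2}(s\!\sim\!t,x)$. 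The first step would be the shift identity $J_{*}(t\!\sim\!T,x)=J_{*}(s\!\sim\!T+s-t,x)$: for a fixed pair of Markov strategies it follows by conditioning on the first jump after the current time and on the post-jump state (as in the representation (\ref{eq 3.21n})), by Lemma~\ref{lemm 3.1}(a) for the finiteness of the number of jumps on a bounded interval, and by the Markov property of $\{\xi_u\}$, which lets one match --- jump by jump and after the reindexing $u\mapsto u-r$ --- the law of $\{\xi_u:\,t\le u\le T\}$ from $\xi_t=x$ under $(\pi^1,\pi^2)$ with that of $\{\xi_w:\,s\le w\le T+s-t\}$ from $\xi_s=x$ under $(\pi^1_{s,t},\pi^2_{s,t})$; along this coupling the running-cost integrands and the terminal states correspond. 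Since $\pi^i\mapsto\pi^i_{s,t}$ is invertible on $\Pi^i_m$, applying $\sup_{\pi^1}\inf_{\pi^2}$ to both sides (and running the argument symmetrically) yields the identity.

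The second step is the easy one: since $c\ge0$ on $K$ and $T+s-t\le T$, widening the running-cost integral from $[s,T+s-t]$ to $[s,T]$ only enlarges the exponent, whence $J^{\pi^1,\pi^2}(s\!\sim\!T+s-t,x)\le J^{\pi^1,\pi^2}(s\!\sim\!T,x)$ for every $(\pi^1,\pi^2)$ and therefore $J_{*}(s\!\sim\!T+s-t,x)\le J_{*}(s\!\sim\!T,x)$. Combining this with the shift identity and the tautologies $J_{*}(t\!\sim\!T,x)=J_{*}(t,x)$ and $J_{*}(s\!\sim\!T,x)=J_{*}(s,x)$ gives $J_{*}(t,x)\le J_{*}(s,x)$, i.e. $t\mapsto J_{*}(t,x)$ is decreasing; since $\varphi=J_{*}$ by part (a), so is $t\mapsto\varphi(t,x)$.

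\emph{Where the difficulty lies.} The crux is the shift identity. For a time-homogeneous model it is immediate from the Markov property plus translation invariance of the dynamics, but here $q(\cdot|t,x,a,b)$ and $c(t,x,a,b)$ genuinely depend on $t$, so I would have to set the coupling up with care: the shifted strategies must be arranged so that, after reindexing, the transition mechanism and running cost seen by the shifted sub-process reproduce those of the original, and the ``idle'' interval $[T+s-t,T]$, over which no running cost accrues, must be matched consistently in the terminal term. Making this bookkeeping precise --- via the jump-by-jump/Feynman--Kac representation of Section~3 --- is the technical heart; the rest of the argument is routine.
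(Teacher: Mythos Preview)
Your proposal mirrors the paper's proof essentially step for step: part (a) is deduced directly from Proposition~\ref{prop 4.1}, and part (b) proceeds by introducing the time-shifted Markov strategies $\pi^i_{s,t}$, the sub-horizon functional $J^{\pi^1,\pi^2}(s\!\sim\!t,x)$ and its value $J_*(s\!\sim\!t,x)$, establishing the shift identity $J_*(t\!\sim\!T,x)=J_*(s\!\sim\!T+s-t,x)$ via the Markov property and the bijection $\pi^i\leftrightarrow\pi^i_{s,t}$, and finally using $c\ge 0$ to enlarge the integration interval. Your closing remark about the time-inhomogeneity of $q$ and $c$ being the genuine technical point is apt; the paper handles it by simply asserting the distributional equality of $\xi_u$ under $(\pi^1,\pi^2,\xi_t=x)$ and $\xi_{u+s-t}$ under $(\pi^1_{s,t},\pi^2_{s,t},\xi_s=x)$ without further elaboration.
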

 \begin{thm}\label{thm 4.1}
 Under Assumptions \ref{assm 3.1}, \ref{assm 3.2} and \ref{assm 4.1}, if in addition $c(t,x,a,b)\geq 0$ and $g(t,x)\geq 0$ for all $(t,x,a,b)\in K$, then there exists a unique $\varphi\in \mathbb{B}^{ac}_{V,V_1}([0,T]\times S)$ and some pair of strategies $(\pi^{*1},\pi^{*2})\in \Pi^1_{m}\times \Pi^2_{m}$ satisfying the equations (\ref{eq 3.21}), (\ref{eq 3.22}) and (\ref{eq 3.22a}).
  		\begin{proof}  		 First observe that $1\leq e^{2(T+1) c(t,x,a,b)}\leq M_2 V(\phi(x,T-t))$ and $1\leq e^{2(T+1) g(t,x)}\leq M_2 V(\phi(x,T-t))$. For each integer $n\geq 1$, $x\in S$, $t\in [0,T]$, define $S_n:=\{x\in S|V(x)\leq n\}$, $A_n(t,x):=A(t,x)$ and $B_n(t,x):=B(t,x).$ 
  		For each $(t,x,a,b)\in K_n:=K$,
  			define 	\begin{align}
  		q_{n}(dy|t,x,a,b)	:=\left\{ \begin{array}{ll}	
  		q(dy|t,x,a,b)~\text{ if}~x\in S_n,\\
  		0~\text{ if }~x\notin S_n,\label{eq 4.5}
  		\end{array}\right.
  		\end{align}
  		\begin{align}
  		c_n^{+}(t,x,a,b):=\left\{ \begin{array}{ll}	
  		c(t,x,a,b)\wedge \text{ min}\{n,\frac{1}{(T+1)}\ln\sqrt{M_2V(\phi(x,T-t))}\}~\text{ if}~x\in S_n,\\
  		0~\text{ if }~x\notin S_n.\label{eq 4.6}
  		\end{array}\right.
  		\end{align}
  		and 
  		\begin{align}
  	g_n^{+}(t,x):=\left\{ \begin{array}{ll}	
  	g(t,x)\wedge \text{ min}\{n,\frac{1}{(T+1)}\ln\sqrt{M_2V(\phi(x,T-t))}\}~\text{ if}~x\in S_n,\\
  	0~\text{ if }~x\notin S_n.\label{eq 4.7}
  	\end{array}\right.
  	\end{align}
 By (\ref{eq 4.5}), it is obvious that $q_{n}(dy|t,x,a,b)$ is transition rates on $S$ satisfying conservative and stable conditions. 
  	  	Now	we consider the sequence of bounded cost rates PDMGs models $\mathscr{M}^{+}_n:=\{S,A, (A_n(t,x),x\in S),B, (B_n(t,x),x\in S),q_n, \phi(x,t), c^{+}_n,g^{+}_n\}$ and for any pair of Markov strategies $(\pi^1,\pi^2)\in \Pi_m^1\times\Pi_m^2$, define the value function $$J_n(t,x):=\sup_{\pi^1\in \Pi_m^1}\inf_{\pi^2\in\Pi_m^2}J^{\pi^1,\pi^2}_n(t,x).$$ Then by Proposition \ref{prop 4.1}, for each $n \geq 1$, we get a unique $\varphi_n$ in $\mathbb{B}^{ac}_{V,V_1}([0,T]\times S)$ and $(\pi^{*1}_n,\pi^{*2}_n)\in \Pi_{m}^1\times \Pi_{m}^2$ satisfying 
  		\interdisplaylinepenalty=0
  		\begin{align}
  		&-L^\phi\varphi_n(t,x)\nonumber\\
  		&=\sup_{\mu\in P(A(t,x))}\inf_{\nu\in P(B(t,x))}\biggl\{\lambda c^{+}_n(t,x,\mu,\nu)\varphi_n(t,x)+\int_{S}\varphi_n(t,y)q_n(dy|t,x,\mu,\nu)\biggr\}\nonumber\\
  		&=\inf_{\nu\in P(B(t,x))}\sup_{\mu\in P(A(t,x))}\biggl\{\lambda c^{+}_n(t,x,\mu,\nu)\varphi_n(t,x)+\int_{S}\varphi_n(t,y)q_n(dy|t,x,\mu,\nu)\biggr\}\nonumber\\
  		&=\inf_{\nu\in P(B(t,x))}\biggl\{\lambda c^{+}_n(t,x,\pi^{*1}_n(\cdot|x,t),\nu)\varphi_n(t,x)+\int_{S}\varphi_n(t,y)q_n(dy|t,\pi^{*1}_n(\cdot|x,t),\nu)\biggr\}\nonumber\\
  		&=\sup_{\mu\in P(A(t,x))}\biggl\{\lambda c^{+}_n(t,x,\mu,\pi^{*2}_n(\cdot|x,t))\varphi_n(t,x)+\int_{S}\varphi_n(t,y)q_n(dy|t,x,\mu,\pi^{*2}_n(\cdot|x,t))\biggr\}\nonumber\\
  		&\varphi_n(t,x)=e^{\lambda g_n(T,x)}~~~s\in [0,T],~x\in S.\label{eq 4.8}
  		\end{align}
  		Now, we have $e^{2(T+1) c^{+}_n(t,x,a,b)}\leq M_2 V(\phi(x,T-t))$ and $e^{2(T+1) g^{+}_n(t,x)}\leq M_2 V(\phi(x,T-t))$. Hence by Lemma \ref{lemm 3.1}, Proposition \ref{prop 4.1} and (\ref{eq 4.8}), we have 
  		\begin{align}
  	e^{-\lambda L_2 V(\phi(x,T-t))}\leq \varphi_{n}(t,x)=\sup_{\pi^1\in \Pi_m^1}J^{\pi^1,\pi^{*2}_n}_n(t,x)\leq L_2 V(\phi(x,T-t))~ \forall~ n\geq 1.\label{eq 4.9}
  		\end{align}
  		 Moreover, since $\varphi_{n}(t,x)\geq 0$ and $c^{+}_n(t,x,a,b)\geq c^{+}_{n-1}(t,x,a,b)$ for all $(t,x,a,b)\in K$, by (\ref{eq 4.5}), (\ref{eq 4.6}), (\ref{eq 4.8}) and Proposition \ref{prop 4.2}, we have, for all $x\in S$ and a.e. $t$, we have 
  		\begin{align}
  		\left\{ \begin{array}{llll}&L^\phi\varphi_n(t,x)+\displaystyle\biggl[\lambda c^{+}_{n-1}(t,x,\mu,\pi^{*2}_n(\cdot|x,t))\varphi_n(t,x)
  		+\int_{S}\varphi_n(t,y)q_{n-1}(dy|t,x,\mu,\pi^{*2}_n(\cdot|x,t))\biggr]\\&\leq 0~\quad\quad\text{if}~x\in S_{n-1}\label{eq 4.10}
  		\end{array}\right.
  		\end{align}
  		and 
  		\begin{align}
  		\left\{ \begin{array}{llll}&L^\phi\varphi_n(t,x)+\displaystyle\biggl[\lambda c^{+}_{n-1}(t,x,\mu,\pi^{*2}_n(\cdot|x,t))\varphi_{n}(t,x)+\int_{S}\varphi_{n}(t,y)q_{n-1}(dy|t,x,\mu,\pi^{*2}_n(\cdot|x,t))\biggr]\\
  		&=L^\phi\varphi_n(t,x)\leq 0~\quad\quad\text{if}~x\notin S_{n-1}.
  		\end{array}\right.\label{eq 4.11}
  		\end{align}
  		So, for any $\pi^1\in \Pi_m^1$, by Feynman-Kac formula, we get
  		\begin{align*}
  		J^{\pi^1,\pi^{*2}_n}_{n-1}(t,x)\leq \varphi_{n}(t,x).
  		\end{align*}
  		Since $\pi^1\in \Pi_m^1$ is arbitrary
  		\begin{align*}
  		\sup_{\pi^1\in \Pi_m^1}	J^{\pi^1,\pi^{*2}_n}_{n-1}(t,x)\leq \varphi_{n}(t,x).
  		\end{align*}
  		Hence
  			\begin{align}
  	\inf_{\pi^2\in \Pi_m^2}	\sup_{\pi^1\in \Pi_m^1}	J^{\pi^1,\pi^2}_{n-1}(t,x)\leq  \varphi_{n}(t,x).\label{eq 4.12}
  		\end{align}
  		Also, using (\ref{eq 4.8}) and Feynman-Kac formula, (similar proof as in Corollary \eqref{corr 3.1}), we have
  	\begin{align}
  	\sup_{\pi^1\in \Pi_m^1}\inf_{\pi^2\in \Pi_m^2}J^{\pi^1,\pi^2}_{n-1}(t,x)=\inf_{\pi^2\in\Pi_m^2}	\sup_{\pi^1\in \Pi_m^1}J^{\pi^1,\pi^2}_{n-1}(t,x)= \varphi_{n-1}(t,x).\label{eq 4.13}
  	\end{align}
  	From (\ref{eq 4.12}) and (\ref{eq 4.13}), we have $\varphi_{n-1}(t,x)\leq  \varphi_{n}(t,x)$, that is the sequence $\{\varphi_{n},n\geq 1\}$ is nondecreasing in $n\geq 1$. Also, since  $\varphi_{n}$ has an upper bound, $\lim_{n\rightarrow \infty}\varphi_{n}$ exists. Let 
\begin{align}
\lim_{n\rightarrow \infty}\varphi_{n}(t,x):=\varphi(t,x).\label{eq 4.14}
\end{align}
  		Let
  	\begin{align*}
  	&H_n(t,x):= \sup_{\mu\in P(A_n(t,x))}\inf_{\nu\in P(B_n(t,x))}\biggl\{\lambda c^{+}_n(t,x,\mu,\nu)\varphi_n(t,x)
  	+\int_{S}\varphi_n(t,y)q_n(dy|t,x,\mu,\nu)\biggr\},\\
  	&\quad \forall (t,x)\in [0,T]\times S.
  	\end{align*}
  	Then, by Fan's minimax theorem \cite{Fan}, we have
  	\begin{align*}
  	&H_n(t,x):= \inf_{\nu\in P(B_n(t,x))}\sup_{\mu\in P(A_n(t,x))}\biggl\{\lambda c^{+}_n(t,x,\mu,\nu)\varphi_n(t,x)
  	+\int_{S}\varphi_n(t,y)q_n(dy|t,x,\mu,\nu)\biggr\},\\
  	&\quad \forall (t,x)\in [0,T]\times S.
  	\end{align*}
  	
  	Then, by Assumptions \ref{assm 3.1} and \ref{assm 3.2} and the fact that $\lambda\leq 1$, we get the following result
  	\interdisplaylinepenalty=0
  	\begin{align}
  	|H_n(t,x)|&\nonumber\\
  	&\leq \sup_{\mu\in P(A_n(t,x))}\sup_{\nu\in P(B_n(t,x))}\biggl\{\lambda|c^{+}_n(t,x,\mu,\nu)\varphi_n(t,x)|
  	+\int_{S}|\varphi_n(t,y)q_n(dy|t,x,\mu,\nu)|\biggr\}\nonumber\\
  	& \leq L_2\sup_{\mu\in P(A_n(t,x))}\sup_{\nu\in P(B_n(t,x))}\biggl\{ M_2V(\phi(x,T-t)) V(\phi(x,T-t))+\int_{S}|q_n(dy|t,x,\mu,\nu)|V(\phi(y,T-t))\biggr\}\nonumber\\
  	&\leq L_2\biggl( M_2V^2(\phi(x,T-t))+(b_1+\rho_1V(\phi(x,T-t)))+2q^{*}(x)V(\phi(x,T-t))\biggr)\nonumber\\
  	&\leq L_2\biggl(M_2 V^2(\phi(x,T-t))+(b_1+\rho_1)V^2(\phi(x,T-t))+2M_1V^2(\phi(x,T-t))\biggr)\nonumber\\
  	&\leq L_2M_3V_1(\phi(x,T-t))(M_2+b_1+\rho_1+2M_1)\nonumber\\
  	&=:R(x),~\forall (t,x)\in [0,T]\times S.\label{eq 4.15}
  	\end{align} 		
  		Now, we prove that for each fixed $x\in S$, the equicontinuty of the family $\{\varphi_n(\cdot,x)\}_{n\geq 1}$ on $[0,T]$.
  		So, fix arbitrarily some $x\in S$ and $s,s_0\in [0,T]$,
  		\begin{align}
  		&|\varphi_n(s,x)-\varphi_n(s_0,x)|\nonumber\\
  		&=\biggl|\int_{s}^{T}H_n(t,x)dt-\int_{s_0}^{T}H_n(t,x)dt\biggr|\nonumber\\
  		&\leq R(x)|s-s_0|,\quad \forall n\geq 1.\label{eq 4.16}
  		\end{align}
Hence for each $x\in S$, $\varphi_{n}(\cdot,x)$ is Lipschitz continuous in $t\in [0, T]$. Also, $\varphi_{n}(t,x)$ is increasing as $n\rightarrow\infty$ for any $(t,x)\in [0,T]\times S$, therefore there exists a function $\varphi$ on $[0,T]\times S$ that is continuous with respect to $t\in [0,T]$, such that along a subsequence $\{n_k\}$, we have $\lim_{k\rightarrow\infty}\varphi_{n_k}(\cdot,x)=\varphi(\cdot,x)$.
Now by Lemma \ref{lemm 3.1}, we get
  		\begin{equation}
  		|\varphi(t,x)|\leq L_2V(\phi(x,T-t))\quad \forall t\in [0,T].\label{eq 4.17}
  		\end{equation}
  		
  		Let
  		\begin{align*}
  		&H(t,x):=\sup_{\mu\in P(A(t,x))}\inf_{\nu\in P(B(t,x))}\biggl\{\lambda c(t,x,\mu,\nu)\varphi(t,x)+\int_{S}\varphi(t,y)q(dy|t,x,\mu,\nu)\biggr\},\\
  		&\quad \forall (t,x)\in [0,T]\times S.
  		\end{align*}
  		Then by Fan's minimax theorem, \cite{Fan}, we have
  			\begin{align*}
  			&H(t,x):=\inf_{\nu\in P(B(t,x))}\sup_{\mu\in P(A(t,x))}\biggl\{\lambda c(t,x,\mu,\nu)\varphi(t,x)+\int_{S}\varphi(t,y)q(dy|t,x,\mu,\nu)\biggr\},\\
  			&\quad \forall (t,x)\in [0,T]\times S.
  			\end{align*}
  		We next show that for each fixed $x\in S$ and $t\in [0,T]$,
  		$\lim_{k\rightarrow \infty}H_{n_k}(t,x)=H(t,x)$.
  		Now, since $P(A(t,x))$ is a compact set, under Lemma \ref{lemm 4.1}, by measurable selection theorem in \cite{N}, there exists a sequence of strategies $\mu_{n_k}^{*}\in P(A(t,x))$ such that 
  		\begin{align}
  		H_{n_k}(t,x):= \inf_{\nu\in P(B(t,x))} \biggl\{\lambda c^{+}_{n_k}(t,x,\mu_{n_k}^{*},\nu)\varphi_{n_k}(t,x)
  		+\int_{S}\varphi_{n_k}(t,y)q_{n_k}(dy|t,x,\mu_{n_k}^{*},\nu)\biggr\}.\label{eq 4.18}
  		\end{align}
  		Let $(t,x)\in [0,T]\times S$ be arbitrarily fixed. Since, $P(A(t,x))$ is compact, by taking subsequences if necessary, we assume without loss of generality that $\limsup_{k\rightarrow \infty}H_{n_k}(t,x)=\lim_{k\rightarrow \infty}H_{n_k}(t,x)$ and $\mu^{*}_{n_k}\rightarrow \mu^{*}$ as $k\rightarrow\infty$  for some $\mu^{*}\in P(A(t,x))$. Taking $k\rightarrow\infty$, by Lemma 8.3.7 in Hernandez-Lerma and Lassere (1999) in \cite{HL2}, for arbitrarily fixed $\nu \in P(B(t,x))$, we have	
  		\begin{align*}
  		\limsup_{k\rightarrow\infty}H_{n_k}(t,x)
  		&\leq \limsup_{k\rightarrow \infty}\biggl\{\lambda c^{+}_{n_k}(t,x,\mu_{n_k}^{*},\nu)\varphi_{n_k}(t,x)+\int_{S}\varphi_{n_k}(t,y)q_{n_k}(dy|t,x,\mu_{n_k}^{*},\nu)\biggr\}\\
  		&\leq \biggl\{\lambda c(t,x,\mu^{*},\nu)\varphi(t,x)+\int_{S}\varphi(t,y)q(dy|t,t,x,\mu^{*},\nu)\biggr\}.
  		\end{align*}
  		Since $\nu \in P(B(t,x))$ is arbitrary,
  		\begin{align}
  		\limsup_{k\rightarrow \infty}H_{n_k}(t,x)
  		&\leq \inf_{\nu\in P(B(t,x))}\biggl\{\lambda c(t,x,\mu^{*},\nu)\varphi(t,x)+\int_{S}\varphi(t,y)q(dy|t,x,\mu^{*},\nu)\biggr\}\nonumber\\
  		&\leq \sup_{\mu\in P(A(t,x))}\inf_{\nu\in P(B(t,x))}\biggl\{\lambda c(t,x,\mu,\nu)\varphi(t,x)+\int_{S}\varphi(t,y)q(dy|t,x,\mu,\nu)\biggr\}.\label{eq 4.19}
  		\end{align}
  		Using analogous arguments we can show that
  		\begin{align}
  		\liminf_{k\rightarrow \infty}H_{n_k}(t,x)
  		&\geq \inf_{\nu\in P(B(t,x))}\sup_{\mu\in P(A(t,x))}\biggl\{\lambda c(t,x,\mu,\nu)\varphi(t,x)
  		+\int_{S}\varphi(t,y)q(dy|t,x,\mu,\nu)\biggr\}.\label{eq 4.20}
  		\end{align}
  		We know that $\liminf_{k\rightarrow \infty}H_{n_k}(t,x)\leq \limsup_{k\rightarrow\infty} H_{n_k}(t,x)$.
  		So, by (\ref{eq 4.19}) and (\ref{eq 4.20}), we get 
  		  		\interdisplaylinepenalty=0
  	\begin{align}
  		\lim_{k\rightarrow \infty}H_{n_k}(t,x)
  		&=\inf_{\nu\in P(B(t,x))}\sup_{\mu\in P(A(t,x))}\biggl\{\lambda c(t,x,\mu,\nu)\varphi(t,x)
  		+\int_{S}\varphi(t,y)q(dy|t,x,\mu,\nu)\biggr\}\nonumber\\
  		&=\sup_{\mu\in P(A(t,x))}\inf_{\nu\in P(B(t,x))}\biggl\{\lambda c(t,x,\mu,\nu)\varphi(t,x)
  		+\int_{S}\varphi(t,y)q(dy|t,x,\mu,\nu)\biggr\}.\label{eq 4.21}
  		\end{align}
  		That is $\lim_{k\rightarrow \infty}H_{n_k}(t,x)=H(t,x)$ for all $(t,x)\in [0,T]\times S$.\\
  		Since $\lim_{k\rightarrow\infty}\varphi_{n_k}(\cdot,x)=\varphi(\cdot,x)$ and $(s,x)\in [0,T]\times S$ was arbitrarily fixed, we see using, (\ref{eq 4.21}) taking limit $n_k\rightarrow\infty$ in (\ref{eq 4.8}), by the dominated convergent theorem (since $|H_{n_k}(t,x)|\leq R(x)$), we say that $\varphi$ satisfies (\ref{eq 3.21}).
  		So, we obtain
  		  		\begin{align}
  	&	\varphi(s,x)-e^{g(T,\phi(x,T-s))}\nonumber\\
  		&=\int_{s}^{T}\sup_{\mu\in P(A(z,\phi(x,z-s)))}\inf_{\nu\in P(B(z,\phi(x,z-s)))}\biggl\{\lambda c(z,\phi(x,z-s),\mu,\nu)\varphi(z,\phi(x,z-s))\nonumber\\
  		&+\int_{S}\varphi(z,y)q(dy|z,\phi(x,z-s),\mu,\nu)\biggr\}dz\nonumber\\
  		&=\int_{s}^{T}\inf_{\nu\in P(B(z,\phi(x,z-s)))}\sup_{\mu\in P(A(z,\phi(x,z-s)))}\biggl\{\lambda c(z,\phi(x,z-s),\mu,\nu)\varphi(z,\phi(x,z-s))\nonumber\\
  		&+\int_{S}\varphi(z,y)q(dy|z,\phi(x,z-s),\mu,\nu)\biggr\}dz\nonumber\\
  		&\varphi_n(s,x)=e^{\lambda g(T,x)}~~~s\in [0,T],~x\in S.\label{eq 4.8n}
  		\end{align}
  		Now, by (\ref{eq 3.21}) and the fundamental theorem of Lebesgue integral calculus  \cite[Theorem 4.4.1]{AL}, $\varphi(\cdot,x)$ is absolutely continuous on $[0,T]$ and by the same argument as in (\ref{eq 4.15}) gives
  		\begin{align*}
  	|L^\phi\varphi(t,x)|=|H(t,x)|\leq R(x),\quad  \forall (t,x)\in [0,T]\times S.
  		\end{align*}
  		
  		Therefore, we see that $\varphi\in \mathbb{B}^{ac}_{V,V_1}([0,T]\times S)$. Furthermore, measurable selection theorem in \cite{N} implies that there exists a pair of strategies $(\pi^{*1},\pi^{*2})\in \Pi_{m}^1\times \Pi_{m}^2$ satisfying (\ref{eq 3.21}).\\
  		Finally, we have to verify the uniqueness of the solution $\varphi$ of the optimality equation (\ref{eq 3.21}). Let $\varphi\in \mathbb{B}^{ac}_{V,V_1}([0,T]\times S)$ be an arbitrarily fixed solution to (\ref{eq 3.21}).
  	Now by Proposition \ref{prop 4.1}, we see $\varphi(t,x)=\sup_{\pi^1\in \Pi_m^1}\inf_{\pi^2\in \Pi_m^2}J^{\pi^1,\pi^2}(t,x)$. So, $\varphi$ is the unique solution to (\ref{eq 3.21}) satisfying (\ref{eq 3.22a}), out of $\varphi\in \mathbb{B}^{ac}_{V,V_1}([0,T]\times S)$.	
 	\end{proof}
  \end{thm}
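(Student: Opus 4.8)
The plan is to reduce the unbounded problem to the bounded one already settled in Proposition~\ref{prop 4.1} via an exhaustion (truncation) argument. First I would set $S_n:=\{x\in S: V(x)\le n\}$ and replace the data by truncations $q_n,c_n^+,g_n^+$, obtained by killing the transition rates outside $S_n$ and capping the (nonnegative) costs at $\min\{n,\frac{1}{T+1}\ln\sqrt{M_2V(\phi(x,T-t))}\}$; by construction $q_n$ stays conservative and stable, the sequences $c_n^+$ and $g_n^+$ are nondecreasing in $n$, and $e^{2(T+1)c_n^+}\le M_2V(\phi(x,T-t))$ together with the analogous bound for $g_n^+$ hold. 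For each $n$, Proposition~\ref{prop 4.1} applied to the bounded model $\mathscr{M}^+_n$ yields a unique $\varphi_n\in\mathbb{B}^{ac}_{V,V_1}([0,T]\times S)$ and a saddle pair $(\pi_n^{*1},\pi_n^{*2})\in\Pi_m^1\times\Pi_m^2$ solving the corresponding Shapley equation (the analogue of (\ref{eq 3.21}) for $\mathscr{M}^+_n$), and Lemma~\ref{lemm 3.1} provides the $n$-free two-sided bound $e^{-\lambda L_2V(\phi(x,T-t))}\le\varphi_n(t,x)\le L_2V(\phi(x,T-t))$.

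Next I would prove that $\{\varphi_n\}$ is nondecreasing in $n$. The key point is that, since $\varphi_n\ge 0$, $c_{n-1}^+\le c_n^+$, $q_{n-1}$ is supported on $S_{n-1}\subseteq S_n$, and $\varphi_n$ is decreasing in $t$ by Proposition~\ref{prop 4.2} (so $L^\phi\varphi_n\le 0$ on $S\setminus S_{n-1}$), $\varphi_n$ is a supersolution of the Shapley inequality for the $(n-1)$-model; the Feynman--Kac/verification argument of Corollary~\ref{corr 3.1} then gives $\sup_{\pi^1}J_{n-1}^{\pi^1,\pi_n^{*2}}(t,x)\le\varphi_n(t,x)$, hence $\varphi_{n-1}=\inf_{\pi^2}\sup_{\pi^1}J_{n-1}^{\pi^1,\pi^2}\le\varphi_n$. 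Monotonicity and the uniform upper bound then produce a pointwise limit $\varphi:=\lim_n\varphi_n$ still satisfying the two-sided bounds.

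The core of the proof is the limit passage in the nonlinear min--max term. I would write the $n$-th equation in integral form, $\varphi_n(s,x)=e^{\lambda g_n^+(T,\phi(x,T-s))}+\int_s^T H_n(t,\phi(x,t-s))\,dt$, with $H_n$ the min--max expression appearing in the truncated Shapley equation, and derive from Assumptions~\ref{assm 3.1}--\ref{assm 3.2} the $n$-free estimate $|H_n(t,x)|\le R(x):=L_2M_3V_1(\phi(x,T-t))(M_2+b_1+\rho_1+2M_1)$. This forces each $\varphi_n(\cdot,x)$ to be Lipschitz in $t$ with constant $R(x)$ uniformly in $n$, so by equicontinuity together with the monotone convergence already obtained, $\varphi(\cdot,x)$ is continuous and some subsequence $\varphi_{n_k}(\cdot,x)$ converges to it. The hard part will be to show $\lim_k H_{n_k}(t,x)=H(t,x)$ for each fixed $(t,x)$: I would use compactness of $P(A(t,x))$ and $P(B(t,x))$ (Assumption~\ref{assm 4.1}), the joint continuity of $c(t,x,\mu,\nu)$ and of $(\mu,\nu)\mapsto\int_S\varphi(t,y)q(dy|t,x,\mu,\nu)$ from Lemma~\ref{lemm 4.1}, a measurable selection of near-optimal $\mu_{n_k}^*\in P(A(t,x))$, and the generalized convergence lemma \cite[Lemma~8.3.7]{HL2} to pass to the limit in both the $\sup\inf$ and the $\inf\sup$ forms of $H_{n_k}$ --- which agree by Fan's minimax theorem \cite{Fan} --- thereby identifying the common limit with $H(t,x)$. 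Dominated convergence in the integral (dominating function $R(x)$) then shows that $\varphi$ solves (\ref{eq 3.21}); by the fundamental theorem of Lebesgue calculus \cite[Theorem~4.4.1]{AL}, $\varphi(\cdot,x)$ is absolutely continuous with $|L^\phi\varphi(t,x)|=|H(t,x)|\le R(x)$, so $\varphi\in\mathbb{B}^{ac}_{V,V_1}([0,T]\times S)$, and a last application of the measurable selection theorem \cite{N} supplies the saddle pair $(\pi^{*1},\pi^{*2})\in\Pi_m^1\times\Pi_m^2$ realizing the $\sup\inf$ and $\inf\sup$.

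Finally, uniqueness of $\varphi$ and the game-value identities (\ref{eq 3.22})--(\ref{eq 3.22a}) will follow from the verification machinery already available: by Corollary~\ref{corr 3.1} and Theorem~\ref{Theorem 3.2}, any $\varphi\in\mathbb{B}^{ac}_{V,V_1}([0,T]\times S)$ solving (\ref{eq 3.21}) must coincide with $\sup_{\pi^1\in\Pi_m^1}\inf_{\pi^2\in\Pi_m^2}J^{\pi^1,\pi^2}(t,x)$, which pins $\varphi$ down and simultaneously yields the equalities. I expect the only genuine obstacle to be the limit $H_{n_k}\to H$, since it demands controlling the simultaneous truncation of $q$, $c$, and $g$ with merely the $n$-uniform but $x$-dependent bound $R(x)$ while interchanging the limit with both the inner infimum and the outer supremum --- exactly where compactness, Lemma~\ref{lemm 4.1}, and the generalized convergence lemma are indispensable.
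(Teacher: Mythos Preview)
Your proposal is correct and follows essentially the same route as the paper's proof: the same truncation $(S_n,q_n,c_n^+,g_n^+)$, the same application of Proposition~\ref{prop 4.1} and Lemma~\ref{lemm 3.1} for the $n$-free bounds, the same monotonicity argument via Proposition~\ref{prop 4.2} and the Feynman--Kac comparison, the identical estimate $|H_n(t,x)|\le R(x)=L_2M_3V_1(\phi(x,T-t))(M_2+b_1+\rho_1+2M_1)$ yielding equi-Lipschitz continuity, and the same limit passage $H_{n_k}\to H$ through compactness, measurable selection, \cite[Lemma~8.3.7]{HL2}, Fan's minimax theorem, and dominated convergence. The only cosmetic difference is that for uniqueness you invoke Corollary~\ref{corr 3.1} and Theorem~\ref{Theorem 3.2} directly, whereas the paper cites Proposition~\ref{prop 4.1}; the underlying verification mechanism is the same.
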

 
 The main optimal result is the following one from which we know the existence of saddle-point equilibrium and the existence of the value of the game.
 \begin{thm}\label{thm 4.2}
	Suppose Assumptions \ref{assm 3.1}, \ref{assm 3.2} and \ref{assm 4.1} are satisfied. Then the following assertions hold.
	\begin{enumerate}
		\item [(a)]  There exists a unique $\tilde{\varphi}\in \mathbb{B}^{ac}_{V,V_1}([0,T]\times S)$ and some pair of strategies $(\pi^{*1},\pi^{*2})\in \Pi^1_{m}\times \Pi^2_{m}$ satisfying the equations (\ref{eq 3.21}), (\ref{eq 3.22}) and (\ref{eq 3.22a}).
		\item [(b)] The pair of strategies, $(\pi^{*1},\pi^{*2})\in \Pi^1_{m}\times \Pi^2_{m}$ in part (a) is a saddle-point equilibrium.
	\end{enumerate}
\begin{proof}
\begin{itemize}
	\item [(a)]	For each $n\geq 1$, define $c_n$ and $g_n$ on $K$ as: 
	$$c_n(t,x,a,b):=\max\{-n,c(t,x,a,b)\},~g_n(t,x):=\max\{-n,g(t,x)\}$$ for each $(t,x,a,b)\in K$.
	Then $\lim_{n\rightarrow \infty}c_n(t,x,a,b)=c(t,x,a,b)$ and $\lim_{n\rightarrow \infty}g_n(t,x)=g(t,x)$. Define $r_n(t,x,a,b):=c_n(t,x,a,b)+n$ and $\tilde{g}_n(t,x):=g_n(t,x)+n$. So, $r_n(t,x,a,b)\geq 0$ and $\tilde{g}_n(t,x)\geq 0$ for each $n\geq 1$ and $(t,x,a,b)\in K$. Now by Assumption \ref{assm 3.1}, we have 
	\begin{align}
	-\frac{1}{T+1}\ln\sqrt{M_2V(\phi(x,T-t))}&\leq \max\{-n,-\frac{1}{T+1}\ln\sqrt{M_2V(\phi(x,T-t))}\}\nonumber\\
	&\leq c_n(t,x,a,b)\leq \frac{1}{T+1}\ln\sqrt{M_2V(\phi(x,T-t))}\label{eq 4.22}
	\end{align}
	and
		\begin{align}
	-\frac{1}{T+1}\ln\sqrt{M_2V(\phi(x,T-t))}&\leq \max\{-n,-\frac{1}{T+1}\ln\sqrt{M_2V(\phi(x,T-t))}\}\nonumber\\
	&\leq g_n(t,x)\leq \frac{1}{T+1}\ln\sqrt{M_2V(\phi(x,T-t))}.\label{eq 4.23}
	\end{align}
	So, we have $e^{2(T+1)r_n(t,x,a,b)}\leq e^{2(T+1)n}M_2V(\phi(x,T-t))$ and $e^{2(T+1)\tilde{g}_n(t,x)}\leq e^{2(T+1)n}M_2V(\phi(x,T-t))$ for all $n\geq 1$ and $(t,x,a,b)\in K$.
	Define a new model $\mathscr{R}_n:=\{S,(A_n(t,x),x\in S),(B_n(t,x),x\in S),q,\phi(x,t),r_n,\tilde{g}_n\}$.
	Now for any real-valued measurable functions $\tilde{\psi}$ and $\tilde{\phi}$ defined on $K$and $[0,T]\times S$, respectively, define 
	\begin{align}
	&\tilde{J}^{\tilde{\psi},\tilde{\phi}}(s,x)\nonumber\\&:=\sup_{\pi^1\in \Pi_m^1}\inf_{\pi^2\in \Pi_m^2} {E}^{\pi^1,\pi^2}_{(s,x)}\biggl[exp\biggl(\lambda\int_{s}^{T}\int_{B}\int_{A}\tilde{\psi}(t,\xi_t,a,b)\pi^1(da|\xi_t,t)\pi^2(db|\xi_t,t)dt+\lambda \tilde{\phi}(T,\xi_T)\biggr)\biggr]\label{eq 4.24}
	\end{align}
	assuming that the integral exists.
	Now since $r_n\geq 0$, $\tilde{g}_n\geq 0$ and all Assumptions hold for the model  $\mathscr{R}_n$, by Theorem \ref{thm 4.1}, we have
	\begin{align}
	&-L^\phi \tilde{J}^{r_n,\tilde{g}_n}(s,x)\nonumber\\
	&=\sup_{\mu\in P(A(t,x))}\inf_{\nu\in P(B(t,x))}\biggl\{\lambda r_n(t,x,\mu,\nu)	\tilde{J}^{r_n,\tilde{g}_n}(t,x)+\int_{S}	\tilde{J}^{r_n,\tilde{g}_n}(t,y)q(dy|t,x,\mu,\nu)\biggr\}\nonumber\\
	&=\inf_{\nu\in P(B(t,x))}\sup_{\mu\in P(A(t,x))}\biggl\{\lambda r_n(t,x,\mu,\nu)\tilde{J}^{r_n,\tilde{g}_n}(t,x)+\int_{S}	\tilde{J}^{r_n,\tilde{g}_n}(t,y)q(dy|t,x,\mu,\nu)\biggr\}\nonumber\\
	&=\inf_{\nu\in P(B(t,x))}\biggl\{\lambda r_n(t,x,\pi^{*1}(\cdot|x,t),\nu)	\tilde{J}^{r_n,\tilde{g}_n}(t,x)+\int_{S}	\tilde{J}^{r_n,\tilde{g}_n}(t,y)q(dy|t,x,\pi^{*1}(\cdot|x,t),\nu)\biggr\}\nonumber\\
	&=\sup_{\mu\in P(A(t,x))}\biggl\{\lambda r_n(t,x,\mu,\pi^{*2}(\cdot|x,t))	\tilde{J}^{r_n,\tilde{g}_n}(t,x)+\int_{S}	\tilde{J}^{r_n,\tilde{g}_n}(t,y)q(dy|t,x,\mu,\pi^{*2}(\cdot|x,t))\biggr\}\nonumber\\
	&\tilde{J}^{r_n,\tilde{g}_n}(T,x)=e^{\tilde{g}_n(T,x)}~~~s\in [0,T],~x\in S.\label{eq  4.25}
	\end{align}
	 Now 
	$$\tilde{J}^{r_n,\tilde{g}_n}(s,x)=\tilde{J}^{c_n+n,g_n+n}(s,x)=\tilde{J}^{c_n,g_n}(s,x)e^{\lambda(T-s+1)n}.$$
	So, using (\ref{eq  4.25}), we can write for a.e. $s\in [0,T]$, 
		\begin{align}
	&-L^\phi \tilde{J}^{c_n,g_n}(s,x)\nonumber\\
	&=\sup_{\mu\in P(A(s,x))}\inf_{\nu\in P(B(s,x))}\biggl\{\lambda c_n(s,x,\mu,\nu)\tilde{J}^{c_n,g_n}(s,x)+\int_{S}\tilde{J}^{c_n,g_n}(s,y)q(dy|s,x,\mu,\nu)\biggr\}\nonumber\\
	&=\inf_{\nu\in P(B(s,x))}\sup_{\mu\in P(A(s,x))}\biggl\{\lambda c_n(s,x,\mu,\nu)\tilde{J}^{c_n,g_n}(s,x)+\int_{S}\tilde{J}^{c_n,g_n}(s,y)q(dy|s,x,\mu,\nu)\biggr\}\nonumber\\
	&\tilde{J}^{c_n,{g}_n}(T,x)=e^{{g}_n(T,x)}.\label{eq 4.27}
	\end{align}
	Now by (\ref{eq 4.27}) and Lemma \ref{lemm 3.1}, we have
	\begin{align}
	|\tilde{J}^{c_n,g_n}(t,x)|\leq L_2V(\phi(x,T-t)),~~n\geq 1.\label{eq 4.28}
	\end{align}
	Now since $c_n(t,x,a,b)$  and $g_n(t,x)$ are decreasing in $n\geq 1$, so the corresponding value function $\tilde{J}^{c_n,g_n}(t,x)$ is also decreasing in $n$. Also by Lemma \ref{lemm 3.1}, we know that $\tilde{J}^{c_n,g_n}$ has a lower bound. So, $\lim_{n\rightarrow \infty}\tilde{J}^{c_n,g_n}(t,x)$ exists. Let $\lim_{n\rightarrow \infty}\tilde{J}^{c_n,g_n}(t,x)=:\tilde{\varphi}(t,x)$ for each $(t,x)\in [0,T]\times S$. Then by the same arguments as Theorem 4.1, with $\varphi_{n}(t,x)$ replaced with $\tilde{J}^{c_n,g_n}(t,x)$ here, by (\ref{eq 4.27}), (\ref{eq 4.28}), Assumptions \ref{assm 3.1}, and \ref{assm 3.2}, we see that (a) is true. 
	\item [(b)] Now by measurable selection theorem, there exists a pair of strategies $(\pi^{*1},\pi^{*2})\in \Pi_m^1\times \Pi_m^2$ for which (\ref{eq 3.21}) holds. So, by Proposition \ref{prop 4.1}, we get
		\begin{align*}  		 
	& \sup_{\pi^1\in\Pi^1}\inf_{\pi^2\in \Pi^2}J^{\pi^{1},\pi^{2}}(0,x)=\inf_{\pi^2\in \Pi^2}\sup_{\pi^1\in\Pi^1}J^{\pi^{1},\pi^{2}}(0,x)=\sup_{\pi^1\in\Pi^1}J^{\pi^{1},\pi^{*2}}(0,x) \\
	&=\inf_{\pi^2\in\Pi^2}J^{\pi^{*1},{\pi}^{2}}(0,x)=\tilde{\varphi}(0,x)
	\end{align*}
and
	\begin{align*}  		 
	& \sup_{\pi^1\in\Pi_m^1}\inf_{\pi^2\in \Pi_m^2}J^{\pi^{1},\pi^{2}}(t,x)=\inf_{\pi^2\in \Pi_m^2}\sup_{\pi^1\in\Pi_m^1}J^{\pi^{1},\pi^{2}}(t,x)=\sup_{\pi^1\in\Pi_m^1}J^{\pi^{1},\pi^{*2}}(t,x) \\
	&=\inf_{\pi^2\in\Pi_m^2}J^{\pi^{*1},{\pi}^{2}}(t,x)=\tilde{\varphi}(t,x).
	\end{align*}
From these together with the equations (\ref{eq 2.5}), (\ref{eq 2.6}), (\ref{eq 3.1}) and (\ref{eq 3.2}), we get
		\begin{align*}
	& \sup_{\pi^1\in\Pi^1}\inf_{\pi^2\in \Pi^2}\mathscr{J}^{\pi^{1},\pi^{2}}(0,x)
	=\inf_{\pi^2\in \Pi^2}\sup_{\pi^1\in\Pi^1}\mathscr{J}^{\pi^{1},\pi^{2}}(0,x)
	=\sup_{\pi^1\in\Pi^1}\mathscr{J}^{\pi^{1},\pi^{*2}}(0,x)\\
	&=\inf_{\pi^2\in\Pi^2}\mathscr{J}^{\pi^{*1},{\pi}^{2}}(0,x)
	= \frac{1}{\lambda}\ln\tilde{\varphi}(0,x)
	\end{align*}
		and
	\begin{align*}
	& \sup_{\pi^1\in\Pi_m^1}\inf_{\pi^2\in \Pi_m^2}\mathscr{J}^{\pi^{1},\pi^{2}}(t,x)
	=\inf_{\pi^2\in \Pi_m^2}\sup_{\pi^1\in\Pi_m^1}\mathscr{J}^{\pi^{1},\pi^{2}}(t,x)
	=\sup_{\pi^1\in\Pi_m^1}\mathscr{J}^{\pi^{1},\pi^{*2}}(t,x)\\
	&=\inf_{\pi^2\in\Pi_m^2}\mathscr{J}^{\pi^{*1},{\pi}^{2}}(t,x)
	= \frac{1}{\lambda}\ln\tilde{\varphi}(t,x).
	\end{align*}
	Hence
		\begin{align*}
	&\sup_{\pi^1\in\Pi^1}\inf_{\pi^2\in \Pi^2}\mathscr{J}^{\pi^{1},\pi^{2}}(0,x) =\sup_{\pi^1\in\Pi_m^1}\inf_{\pi^2\in \Pi_m^2}\mathscr{J}^{\pi^{1},\pi^{2}}(0,x)\\	&=\inf_{\pi^2\in \Pi^2}\sup_{\pi^1\in\Pi^1}\mathscr{J}^{\pi^{1},\pi^{2}}(0,x)
	=\inf_{\pi^2\in \Pi_m^2}\sup_{\pi^1\in\Pi_m^1}\mathscr{J}^{\pi^{1},\pi^{2}}(0,x)\\
	&=\sup_{\pi^1\in\Pi^1}\mathscr{J}^{\pi^{1},\pi^{*2}}(0,x)
	=\sup_{\pi^1\in\Pi_m^1}\mathscr{J}^{\pi^{1},\pi^{*2}}(0,x)\\
	&=\inf_{\pi^2\in\Pi^2}\mathscr{J}^{\pi^{*1},{\pi}^{2}}(0,x)
	=\inf_{\pi^2\in\Pi_m^2}\mathscr{J}^{\pi^{*1},{\pi}^{2}}(0,x)\\
	&= \frac{1}{\lambda}\ln\tilde{\varphi}(0,x).
	\end{align*}
	So, the pair of strategies $(\pi^{*1},\pi^{*2})\in \Pi_{m}^1\times \Pi_{m}^2$ is a saddle-point equilibrium. Therefore, we complete the proof of the theorem.
	 \begin{remark}
		Note that $({\pi}^{*1},{\pi}^{*2})\in \Pi_m^1\times \Pi_m^2$ as defined in theorem above is a saddle-point equilibrium for the cost criterion (\ref{eq 2.5}) and  we have  $U(x)=L(x)=\mathscr{J}^{*}(0,x)=\frac{1}{\lambda}\log \tilde{\varphi}(0,x)$.  Thus the value of the game exists.
	\end{remark}  
\end{itemize}
		\end{proof}
\end{thm}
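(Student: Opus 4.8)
The plan is to reduce the general real-valued case to the nonnegative case handled in Theorem~\ref{thm 4.1} by a truncation-plus-shift argument followed by a monotone limit. First I would truncate the data from below, setting $c_n(t,x,a,b):=\max\{-n,c(t,x,a,b)\}$ and $g_n(t,x):=\max\{-n,g(t,x)\}$, and then shift to make them nonnegative, $r_n:=c_n+n\ge 0$, $\tilde g_n:=g_n+n\ge 0$. Assumption~\ref{assm 3.1}(iii) forces the two-sided bounds (\ref{eq 4.22})--(\ref{eq 4.23}), so the shifted model $\mathscr{R}_n$ satisfies Assumptions~\ref{assm 3.1}, \ref{assm 3.2}, \ref{assm 4.1} (with an $n$-dependent constant $e^{2(T+1)n}M_2$, harmless for each fixed $n$) together with nonnegativity of its cost data; Theorem~\ref{thm 4.1} then gives a unique solution $\tilde J^{r_n,\tilde g_n}\in\mathbb{B}^{ac}_{V,V_1}([0,T]\times S)$ of the associated Shapley equation and a Markov saddle pair attaining its extrema. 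Using the identity $\tilde J^{r_n,\tilde g_n}(s,x)=e^{\lambda(T-s+1)n}\,\tilde J^{c_n,g_n}(s,x)$ and the product rule for $L^\phi$ along the flow (which produces the extra term $-\lambda n\,\tilde J^{c_n,g_n}$ that cancels the $+n$ in $r_n$ and $\tilde g_n$), a short computation shows $\tilde J^{c_n,g_n}$ solves the Shapley equation (\ref{eq 4.27}) for the truncated, unshifted data.

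Next I would let $n\to\infty$. Since $c_n\downarrow c$ and $g_n\downarrow g$ pointwise and the integrand in the exponential cost is monotone in the data, $\tilde J^{c_n,g_n}(t,x)$ is nonincreasing in $n$; Lemma~\ref{lemm 3.1} supplies the uniform two-sided bound $e^{-\lambda L_2 V(\phi(x,T-t))}\le \tilde J^{c_n,g_n}(t,x)\le L_2 V(\phi(x,T-t))$, so the limit $\tilde\varphi(t,x):=\lim_n \tilde J^{c_n,g_n}(t,x)$ exists and is $V$-bounded. From here the argument is parallel to the proof of Theorem~\ref{thm 4.1} with $\varphi_n$ replaced by $\tilde J^{c_n,g_n}$: the right side $H_n$ of (\ref{eq 4.27}) is bounded by a fixed $V_1$-multiple $R(x)$ uniformly in $n,t$ as in (\ref{eq 4.15}), giving Lipschitz equicontinuity of $\tilde J^{c_n,g_n}(\cdot,x)$ on $[0,T]$ and hence uniform convergence along a subsequence; Lemma~\ref{lemm 4.1}, Fan's minimax theorem \cite{Fan}, and the dominated convergence theorem let me pass to the limit inside $\sup_\mu\inf_\nu$ and inside $\int_S \varphi(t,y)\,q(dy|t,x,\mu,\nu)$, so $\tilde\varphi$ satisfies the integral form of (\ref{eq 3.21}); the $R(x)$-bound on the limiting right-hand side then yields $\|L^\phi\tilde\varphi\|^{es}_{V_1}<\infty$, i.e. $\tilde\varphi\in\mathbb{B}^{ac}_{V,V_1}([0,T]\times S)$, and the measurable selection theorem of \cite{N} produces $(\pi^{*1},\pi^{*2})\in\Pi^1_m\times\Pi^2_m$ attaining the inner extrema, establishing (a).

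For uniqueness and for (\ref{eq 3.22})--(\ref{eq 3.22a}) (hence part (b)) I would argue as in Proposition~\ref{prop 4.1}: any $\tilde\varphi\in\mathbb{B}^{ac}_{V,V_1}$ solving (\ref{eq 3.21}) satisfies the two one-sided differential inequalities obtained by freezing $\pi^{*1}$ (resp. $\pi^{*2}$); feeding these into the verification Corollary~\ref{corr 3.1} gives $\tilde\varphi(0,x)\le \inf_{\pi^2}J^{\pi^{*1},\pi^2}(0,x)$ and $\tilde\varphi(0,x)\ge \sup_{\pi^1}J^{\pi^1,\pi^{*2}}(0,x)$, which together with the trivial chain $\sup_{\pi^1}\inf_{\pi^2}\le\inf_{\pi^2}\sup_{\pi^1}$ squeezes all four quantities in (\ref{eq 3.22}) to $\tilde\varphi(0,x)$; by Theorem~\ref{Theorem 3.2} the middle inequalities are in fact equalities, identifying $\tilde\varphi$ with the value function and so pinning down the solution uniquely, and showing $(\pi^{*1},\pi^{*2})$ is a saddle point (the Markov-class version gives (\ref{eq 3.22a})). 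Finally, since $\mathscr{J}^{\pi^1,\pi^2}=\tfrac1\lambda\ln J^{\pi^1,\pi^2}$ and $\ln$ is increasing, the same chain of equalities transfers to $\mathscr{J}$, whence $L(x)=U(x)=\mathscr{J}^{*}(0,x)=\tfrac1\lambda\ln\tilde\varphi(0,x)$ and the value of the game exists. The main obstacle is the limit passage in the second paragraph --- specifically, justifying on the general Borel state space that $H_n\to H$ using the uniform $V_1$-domination together with the continuity and minimax machinery of Lemma~\ref{lemm 4.1} and \cite{Fan}, and that the limit function remains in $\mathbb{B}^{ac}_{V,V_1}$; the truncation bookkeeping and the passage from the shifted to the unshifted Shapley equation are routine by comparison.
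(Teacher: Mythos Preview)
Your proposal is correct and follows essentially the same route as the paper: truncate $c,g$ from below to $c_n,g_n$, shift by $n$ to obtain nonnegative data $r_n,\tilde g_n$, apply Theorem~\ref{thm 4.1} to the shifted model, undo the shift via the factor $e^{\lambda(T-s+1)n}$ to get (\ref{eq 4.27}), and then pass to the monotone limit exactly as in the proof of Theorem~\ref{thm 4.1}, invoking the $R(x)$ bound, Lemma~\ref{lemm 4.1}, Fan's minimax, dominated convergence, and the measurable selection theorem; part (b) and uniqueness likewise go through Corollary~\ref{corr 3.1} and Theorem~\ref{Theorem 3.2} as in Proposition~\ref{prop 4.1}. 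Your write-up is in fact a bit more explicit than the paper about the product-rule cancellation and the limit-passage mechanics, but the strategy is identical.
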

  
\section{Competing Interests}
The authors do not have any relevant financial or non-financial competing interests.

\bibliographystyle{elsarticle-num}

\end{document}